\documentclass{article}
\usepackage{amsthm,amsfonts, amsbsy, amssymb,amsmath,graphicx}
\usepackage{graphics}

\newtheorem{thm}{Theorem}
\newtheorem{lm}{Lemma}
\newtheorem{re}{Remark}
\newtheorem{crl}{Corollary}

\newtheorem{problem}{Problem}
\newtheorem{prop}{Proposition}
\newtheorem{st}{Statement}

\newcounter{tdfn}
\setcounter{tdfn}{1}
\newenvironment{dfn}
{\vspace{0.15cm}{\bf Definition \arabic{tdfn}.}} {\par
\addtocounter{tdfn}{1}}

\author{Vassily Olegovich Manturov}

\date{}

\title{Embeddings of four-valent framed graphs into $2$-surfaces}

\begin{document}

\maketitle

\begin{abstract}

It is well known that the problem of defining the least (highest)
genus where a given graph can be embedded is closely connected to
the problem of embedding special {\em four-valent framed graphs},
i.e. 4-valent graphs with opposite edge structure at vertices
specified. This problem has been studied, and some cases (e.g.,
recognizing planarity) are known to have a polynomial solution.

The aim of the following paper is to connect the problem above to
several problems which arise in knot theory and combinatorics:
Vassiliev invariants and weight systems coming from Lie algebras,
Boolean matrices etc., and to give both partial solutions to the
problem above and new formulations of it in the language of knot
theory.

\end{abstract}

\newcommand{\skcrossr}{\raisebox{-0.25\height}{\includegraphics[width=0.5cm]{skcrossr.eps}}}
\newcommand{\skcrossl}{\raisebox{-0.25\height}{\includegraphics[width=0.5cm]{skcrossl.eps}}}
\newcommand{\skkinkr}{\raisebox{-0.25\height}{\includegraphics[width=0.5cm]{skkinkr.eps}}}
\newcommand{\skkinkl}{\raisebox{-0.25\height}{\includegraphics[width=0.5cm]{skkinkl.eps}}}
\newcommand{\skroh}{\raisebox{-0.25\height}{\includegraphics[width=0.5cm]{skroh.eps}}}
\newcommand{\skrov}{\raisebox{-0.25\height}{\includegraphics[width=0.5cm]{skrov.eps}}}
\newcommand{\skrtwhh}{\raisebox{-0.25\height}{\includegraphics[width=0.5cm]{skrtwhh.eps}}}
\newcommand{\skrtwvh}{\raisebox{-0.25\height}{\includegraphics[width=0.5cm]{skrtwvh.eps}}}
\newcommand{\skrtwhv}{\raisebox{-0.25\height}{\includegraphics[width=0.5cm]{skrtwhv.eps}}}
\newcommand{\skrtwvv}{\raisebox{-0.25\height}{\includegraphics[width=0.5cm]{skrtwvv.eps}}}
\newcommand{\skcrv}{\raisebox{-0.25\height}{\includegraphics[width=0.5cm]{skcrv.eps}}}
\newcommand{\skcrh}{\raisebox{-0.25\height}{\includegraphics[width=0.5cm]{skcrh.eps}}}
\newcommand{\vcross}{\raisebox{-0.25\height}{\includegraphics[width=0.5cm]{vcross.eps}}}
\newcommand{\skcurl}{\raisebox{-0.25\height}{\includegraphics[width=0.5cm]{skcurl.eps}}}
\newcommand{\skkinkv}{\raisebox{-0.25\height}{\includegraphics[width=0.5cm]{skkinkv.eps}}}

\section{Introduction}

We address the following question. Given a 4-valent graph $\Gamma$
with each vertex endowed with opposite half-edge structure.

A natural question is to study the highest (least) genus of the
surface the graph can be embedded into. Of course, we mean only
embeddings splitting the surface into $2$-cells. We shall address
this general question later in this paper. First, we shall consider
the following partial cases of it. One of them, more general, deals
with embedded graphs whose first ${\bf Z}_{2}$-homology class is
orienting. As a partial case of this, we address the following

\begin{problem}
Which is the least possible (highest possible) genus of a
$2$-surface $S$ this graph can be embedded into in such a way that
the embedding represents the zero homology class in the surface
(alternatively, the complement to the graph is checkerboard
colourable).\label{prob1}
\end{problem}

Embeddings of such graphs representing the ${\bf Z}_{2}$-homology
class are well studied for the case of the plane (see e.g.,
\cite{LovaszMarx,ReadRosenstiel,VassConj}) and in the general case
(see e.g., \cite{LinsRichterSchank,CrapoRosenstiel}).

In fact, any embedding of a $4$-graph in ${\bf R}^{2}$ defines a
checkerboard colouring on the set of  faces because the plane has
trivial first homology. On the other hand, any graph $\Gamma$
embedded into a $2$-surface $S$ (orientable or not) can be
transformed into a $4$-graph by taking the medial graph $\Gamma'$:
the vertices of $\Gamma'$ are the middle points of the edges of
$\Gamma$, the edges of $\Gamma'$ connect {\em adjacent} edges
(sharing the same angle), and faces correspond to faces (white) and
vertices (black) of $\Gamma$, see Fig. \ref{graphs}.

\begin{figure}
\centering\includegraphics[width=300pt]{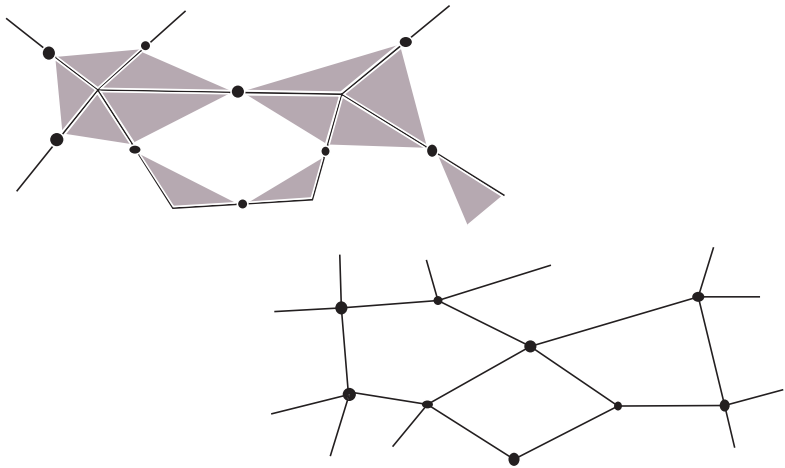} \caption{Any
embedded graph generates a $4$-valent framed graph} \label{graphs}
\end{figure}

Such $4$-valent graphs appeared with many names in different
problems of low-dimensional topology: as {\em atoms} (see rigorous
definition ahead), originally due to Fomenko \cite{F}, see the
connection between atoms and knots in \cite{MyBook}, they are
connected to Grothendieck's {\em dessins d'enfant}, see
\cite{LandoZvonkine} and \cite{LinYiFeng}.

There is a nice connection between combinatorics of Vassiliev
invariants and other invariants of knots and virtual knots and many
well-known functions on graphs, see \cite{CDBook} and references
therein.

Finally, the genus of the atom (the genus of the checkerboard
surface we are interested in) is closely connected to the estimates
of the thickness for Khovanov and Ozsv\'ath-Szab\'o homology for
classical and virtual knots, see \cite{Minimal} and \cite{Lowrance}.

In \cite{CrapoRosenstiel} there was a reformulation of the problem
stated above in terms of ranks of some matrices.

We give another (very easy) formulation in terms of ranks of
matrices which is closely connected to knot theory.

\begin{problem}
Given a symmetric ${\bf Z}_{2}$-matrix $M$ of size $n\times n$, find
a splitting of the set of indices $\{1,\dots, n\}$ into $I\sqcup J$
such that for the corresponding square matrices $M_{I}$ and $M_{J}$,
the sum of ranks $rk(M_{I})+rk(M_{J})$ is minimal (resp.,
maximal).\label{prob2}
\end{problem}

This seems to be the easiest reformulation of the initial problem.
Of course, we are looking for a solution which would either be fast
(say, having polynomial time in the number of vertices) or connected
to some interesting mathematical problems.

In knot theory, the study of classical knots is closely connected to
the so-called {\em $d$-digarams}, chord diagrams with $2$ sets of
pairwise unlinked chords (see rigorous definition ahead). It turns
out that these diagrams play a special role in the chord diagram
algebra having the highest possible degree of the Vassiliev
invariants coming from $sl(n)$ (see \cite{MitRutwig}). On the other
hand, these are precisely those diagrams corresponding (in sense of
\cite{MyBook}) to planar $4$-valent graphs.

This is not incidental. In fact, the generating function for such
embeddings is closely connected to the $sl(n)$-weight system, and
latter sometimes gives estimates for the genus of the atom where the
framed graph can be embedded into.

The paper is organized as follows. In the next section, we give the
definitions of atom, chord diagram, $d$-diagram, virtual link and
establish a connection between them and embedded graphs. We also
give a proof of a conjecture due to Vassiliev (stated in \cite{Vas})
and proved in \cite{VassConj} saying that the only obstruction to
the planar embeddability of such graphs is the existence of two
cycles with no common edges with exactly one {\em transverse}
intersection point.

Later, we also give criteria for embeddability of framed graphs to
the real projective space and in the Klein bottle.

In section 3, we define the Kauffman bracket for the virtual knots
and we recall a result by Soboleva \cite{Soboleva} about the number
of circles which appear after a surgery along a chord diagram. This
will lead us to the reformulation of Problem \ref{prob1} as Problem
\ref{prob2}.

Section 4 will be devoted to the connection between chord diagrams,
weight systems and Vassiliev's invariants coming from Lie algebras
in sense of Bar-Natan \cite{BN}. We shall prove a theorem giving an
estimate in terms of $sl(n)$-invariants.

The last section will be devoted to the discussion and open
problems.

\subsection{Atoms and Knots}

A four-valent planar graph $\Gamma$ generates a natural checkerboard
colouring of the plane by two colours (adjacent components of the
complement ${\bf R}^{2}\backslash \Gamma$ have different colours).

This construction perfectly describes the role played by {\em
alternating diagrams} of classical knots. Recall that a link diagram
is {\em alternating} if while walking along any component one
alternates over- and underpasses. Another definition of an
alternating link diagram sounds as follows: fix a checkerboard
colouring of the plane (one of the two possible colourings). Then,
for every vertex the colour of the region corresponding to the angle
swept by going from the overpass to the underpass in the
counterclockwise direction is the same.

Thus, planar graphs with natural colourings somehow correspond to
alternating diagrams of knots and links on the plane: starting with
a graph and a colouring, we may fix the rule for making crossings:
if two edges share a black angle, then the we decree the left one
(with respect to the clockwise direction) to form an overcrossing,
and the right one to be an undercrossing, see Fig. \ref{overunder}.
Thus, colouring a couple of two opposite angles corresponds to a
choice of a pair of opposite edges to form an overcrossing and vice
versa.

\begin{figure}
\centering\includegraphics[width=200pt]{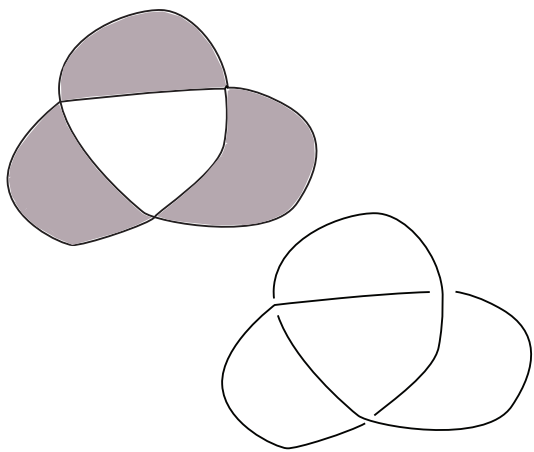}
\caption{Surface colouring and a knot diagram} \label{overunder}
\end{figure}

Now, if we take an arbitrary link diagram and try to fix the
colouring of regions by colouring angles according to the rule
described above, we see that generally it is impossible unless the
initial diagram is alternating: we can just get a region on the
plane where colourings at two adjacent angles disagree. So,
alternating diagrams perfectly match colourings of the $2$-sphere
(think of $S^{2}$ as a one-point compactification of ${\bf R}^{2}$).
For an arbitrary link, we may try to take colours and attach cells
to them in a way that the colours would agree, namely, the circuits
for attaching two-cells are chosen to be rotating circuits, where we
always turn inside the angle of one colour.

This leads to the notion of {\em atom}. An {\em atom} is a pair
$(M,\Gamma)$ consisting of a $2$-manifold $M$ and a graph $\Gamma$
embedded $M$ together with a colouring of $M\backslash \Gamma$ in a
checkerboard manner. Here $\Gamma$ is called the {\em frame} of the
atom, whence by {\em genus} (resp., {\em Euler characteristic}) of
the atom we mean that of the surface $M$.

Note that the atom genus is also called the {\em Turaev genus},
\cite{TuraevGenus}.

Certainly, such a colouring exists if and only if $\Gamma$
represents the trivial ${\bf Z}_{2}$-homology class in $M$.

Thus, gluing cells to some rotating circuits on the diagram, we get
an atom, where the shadow of the knot plays the role of the frame.
Note that the structure of opposite half-edges on the plane
coincides with that on the surface of the atom.

Now, we see that atoms on the sphere are precisely those
corresponding to alternating link diagrams, whence non-alternating
link diagrams lead to atoms on surfaces of a higher genus.

In some sense, the genus of the atom is a measure of how far a link
diagram is from an alternating one, which leads to generalisations
of the celebrated Kauffman-Murasugi theorem, see \cite{MyBook} and
to some estimates concerning the Khovanov homology \cite{Minimal}.

Having an atom, we may try to embed its frame in ${\bf R}^{2}$ in
such a way that the structure of opposite half-edges at vertices is
preserved. Then we can take the ``black angle'' structure of the
atom to restore the crossings on the plane.

In \cite{AtomsandKnots} it is proved that the link isotopy type does
not depend on the particular choice of embedding of the frame into
${\bf R}^{2}$ with the structure of opposite edges preserved. The
reason is that such embeddings are quite rigid.

The atoms whose frame is embeddable in the plane with opposite
half-edge structure preserved are called {\em height} or vertical.

However, not all atoms can be obtained from some classical knots.
Some abstract atoms may be quite complicated for its frame to be
embeddable into ${\bf R}^{2}$ with the opposite half-edges structure
preserved. However, if it is impossible to {\em immerse} a graph in
${\bf R}^{2}$, we may embed it by marking artifacts of the embedding
(we assume the embedding to be generic) by small circles.

This leads to a connection between atoms and {\em virtual knots}
which perfectly agrees with {\em virtual knot theory} proposed by
Kauffman in \cite{KaV}.

\begin{dfn}
A {\em virtual diagram} is a $4$-valent diagram in ${\bf R}^2$ where
each crossing is either endowed with a classical crossing structure
(with a choice for underpass and overpass specified) or just said to
be virtual and marked by a circle.
\end{dfn}

\begin{dfn}
A {\em virtual link} is an equivalence class of virtual link diagram
modulo generalized Reidemeister moves. The latter consist of usual
Reidemeister moves referring to classical crossings and the {\em
detour move} that replaces one arc containing only virtual
intersections and self-intersection by another arc of such sort in
any other place of the plane, see Fig. \ref{detour}.
\end{dfn}

\begin{figure}
\centering\includegraphics[width=200pt]{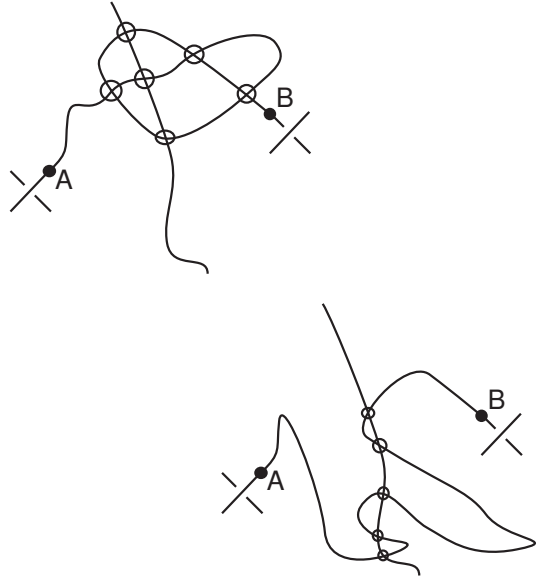} \caption{The
detour move} \label{detour}
\end{figure}

Having freedom of immersing knot diagrams into ${\bf R}^{2}$ instead
of just embedding, we are able to make virtual diagrams out of
atoms. Obviously, since we disregard virtual crossings, the most we
can expect is the well-definiteness up to detours. However, this
allows us to get different virtual link types from the same atom,
since for every vertex $V$ of the atom with four emanating
half-edges $a,b,c,d$ (ordered cyclically on the atom) we may get two
different clockwise-orderings on the plane of embedding, $(a,b,c,d)$
and $(a,d,c,b)$. This leads to a move called {\em virtualisation}.

\begin{dfn}
By a {\em virtualisation} of a classical crossing we mean a local
transformation shown in Fig. \ref{virtualisation}.
\end{dfn}

\begin{figure}
\centering\includegraphics[width=200pt]{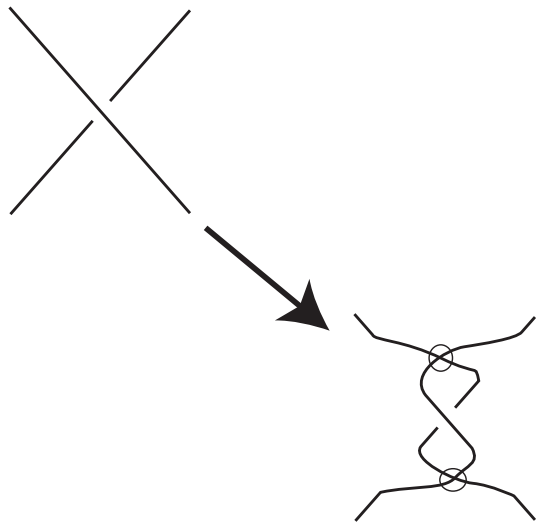}
\caption{Virtualisation} \label{virtualisation}
\end{figure}

The above statements summarise as
\begin{prop}(see., e.g. \cite{MyBook}).
Let $L_1$ and $L_2$ be two virtual links obtained from the same atom
by using different immersions of its frame. Then $L_1$ differs from
$L_2$ by a sequence of (detours and) virtualisations.
\end{prop}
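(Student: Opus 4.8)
The plan is to pass to \emph{Gauss diagrams} and to show that the Gauss diagram of the virtual link obtained from an atom is almost entirely determined by the frame, the only immersion-dependent data being the signs of the crossings. A generic immersion of the frame $\Gamma$ into ${\bf R}^{2}$ produces a virtual diagram whose classical crossings are the images of the vertices of $\Gamma$ and whose virtual crossings are the double points created by the immersion. At each vertex we traverse the diagram by passing from a half-edge to its \emph{opposite} half-edge; since the opposite--half-edge structure is part of the atom and is preserved by every admissible immersion, the trajectory --- the collection of circuits together with the cyclic order in which they meet the vertices --- is an invariant of the frame alone. Consequently the \emph{chord pairing} of the Gauss diagram (which two passages are identified at a given crossing) does not depend on the immersion, and the over/under datum at each crossing is dictated by the black-angle structure of the atom, so the arrows of the Gauss diagram are immersion-independent as well.

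First I would record what \emph{can} change. Fixing once and for all an orientation of each circuit of the trajectory, the sign (local writhe) of a crossing is determined by these orientations together with the over/under datum and the planar handedness at the vertex, i.e. the clockwise order in which the four half-edges $a,b,c,d$ leave it. Preserving the opposite pairs $\{a,c\}$ and $\{b,d\}$ leaves exactly two cyclic orders, $(a,b,c,d)$ and $(a,d,c,b)$, which are mirror images of one another; passing from one to the other reverses the planar handedness and hence flips the sign of that chord, while leaving the chord pairing and the arrow untouched. Thus the Gauss diagrams of $L_1$ and $L_2$ coincide except possibly for the signs at those vertices where the two immersions make opposite local choices.

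Next I would invoke the property of virtual diagrams already emphasised in the text: since virtual crossings carry no information, any two virtual diagrams with the \emph{same} Gauss diagram are related by a sequence of detour moves and planar isotopies (the Goussarov--Polyak--Viro realisation principle). Applying this, I can detour $L_1$ and $L_2$ into diagrams whose only discrepancy is the sign at each vertex where the immersions disagree. It then remains to match the signs. By the definition of virtualisation (Fig.~\ref{virtualisation}) --- which is precisely the operation of exchanging the two admissible clockwise orders $(a,b,c,d)$ and $(a,d,c,b)$ at a crossing --- a single virtualisation flips exactly the sign of the corresponding chord and leaves the rest of the Gauss diagram fixed. Performing one virtualisation at each discrepant vertex therefore carries the Gauss diagram of $L_1$ to that of $L_2$, and a final application of the detour principle realises $L_2$ itself.

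The main obstacle is the local bookkeeping that isolates the sign as the \emph{only} immersion-dependent entry of the Gauss diagram: one must verify carefully that reversing the planar cyclic order at a vertex does not secretly alter the arrow (over/under) or the chord pairing, and that the chosen orientation conventions make a virtualisation correspond to a sign change rather than to some other modification. Once this local analysis is pinned down, the global statement is a formal combination of the detour principle with one virtualisation per discrepant vertex.
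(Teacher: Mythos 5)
Your proposal is correct and follows essentially the same idea the paper relies on: the paper offers no detailed proof (it cites \cite{MyBook}), but the sentence preceding the proposition identifies the only immersion-dependent choice as the two admissible clockwise orderings $(a,b,c,d)$ versus $(a,d,c,b)$ at each vertex, with the switch realised by a virtualisation and everything else absorbed by detour moves. Your Gauss-diagram bookkeeping --- chord pairing and arrows fixed by the atom, only the local writhe flipping, plus the detour realisation principle --- is a careful and accurate elaboration of exactly that argument.
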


Obviously, the inverse operation of obtaining an atom from a virtual
diagram is well defined.

Note that many famous invariants of classical and virtual knots
(Kauffman bracket, Khovanov homology, Khovanov-Rozansky homology
\cite{KhR1,KhR2}) do not change under the virtualisation, which
supports the {\em virtualisation conjecture}: if for two classical
links $L$ and $L'$ there is a sequence $L=L_{0}\to \cdots\to L_{n}$
of virtualisations and generalised Reidemeister moves then $L$ and
$L'$ are classically equivalent (isotopic).

Note that the usual virtual equivalence implies classical
equivalence for classical knots, see \cite{GPV}.

\begin{dfn}
By a chord diagram we mean a cubic graph with a selected oriented
cycle $S^{1}$ so that the remaining edges represent a collection of
disjoint {\em chords}.

A chord diagram is called framed if every chord is marked by either
$+1$ or $-1$. A chord diagram without framings is assumed to have
all chords positive.

Two chords $A,B$ of a chord diagram are {\em linked} if the ends
$A_{1}$ and $A_{2}$ of the first chord lie in different components
of $S^{1}\backslash \{B_{1},B_{2}\}$.

Analogously, one defines a chord diagram on $m$ circles; there
should be $m$ oriented cycles with some points connected by chords.

The {\em intersection graph} of a chord diagram is a graph whose
vertices are in one-to-one correspondence with the edges of chord
diagram and the vertices of the graph. Two vertices are connected by
an edge iff the corresponding chords are linked.

A chord diagram (with all chords framed positively) is a {\em
$d$-diagram} if the corresponding intersection graph is bipartite.

If the chord diagram is framed then the corresponding graph obtains
framings at vertices.
\end{dfn}

Now, assume we have an atom with exactly one white cell. Then the
whole information about the atom can be obtained from a rotating
circuit along the boundary of this cell. Namely, consider a  walk
along this boundary (in any direction) as a map from $S^{1}$ to the
atom, and connect the preimages of vertices of the atom by chords.
We thus construct a {\em framed chord diagram}, where the framing is
positive whence the orientations of the two two circles locally
agree or negative otherwise, see Fig. \ref{twosortsoforientations}.

\begin{figure}
\centering\includegraphics[width=200pt]{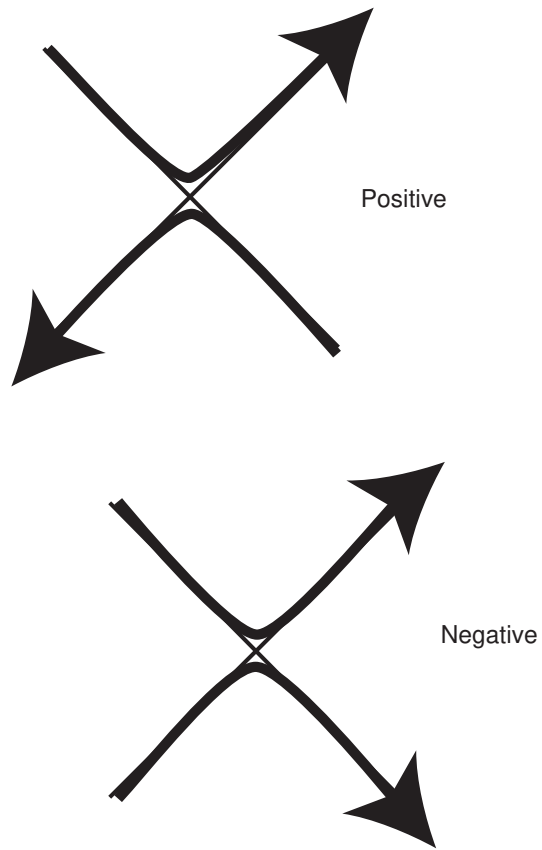} \caption{Two
types of orientations in a neighbourhood of a vertex}
\label{twosortsoforientations}
\end{figure}

From this chord diagram, one easily restores the atom and, thus, the
corresponding virtual link up to detours and virtualisations.

{\bf Notation:} For a chord diagram $C$ with one cell denote the
corresponding atom by $A(C)$ and denote the corresponding virtual
knot (considered up to detours and virtualisations) by $K(C)$.

\subsection{Acknowledgements}

I explain my gratitude for useful discussions to V.A.Vassiliev,
A.T.Fomenko, L.H.Kauffman, S.K.Lando, S.V.Duzhin, S.V.Chmutov,
O.M.Kasim-Zade, O.R. Campoamor-Stursberg.

\section{Chord Diagrams, $1$-Dimensional Surgery and The Kauffman
Bracket}

The Kauffman bracket \cite{KauffmanBracket} is a very useful model
for understanding the Jones polynomial \cite{Jones}. The Kauffman
bracket associates with a virtual knot diagram a Laurent polynomial
in one variable $a$ associated to every virtual diagram. After a
small normalisation (multiplciation by a power of $(-a)$) it gives
an invariant for virtual links.

This invariant can be read out of atom corresponding to a knot
diagram. Namely, take an atom $V$ with $n$ vertices corresponding to
a virtual diagram $L$ with $n$ classical crossings and call a {\em
state} a choice of the couple of black or white angles at every
vertex of $V$. Every such choice gives rise to a collection of
closed curves on $V$ whose boundaries contain all the edges of $V$,
see Fig. \ref{astateonanatom}, and at each crossing the curves turn
locally from one edge to an adjacent edge sharing the same angle of
the prefixed colour.

\begin{figure}
\centering\includegraphics[width=200pt]{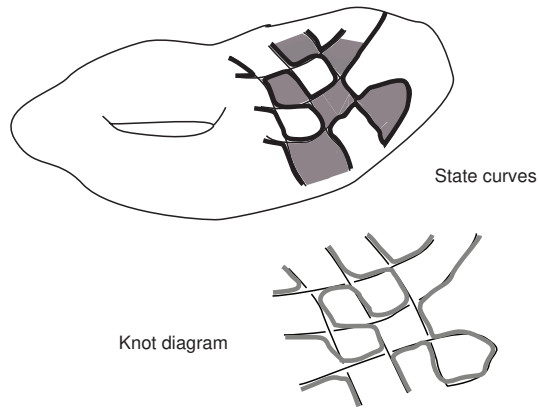} \caption{State
curves drawn on an atom} \label{astateonanatom}
\end{figure}

Thus, having $2^{n}$ states of the atom, we define the Kauffman
bracket of it as

\begin{equation}
\langle V\rangle
=\sum_{s}a^{\alpha(s)-\beta(s)}(-a^{2}-a^{-2})^{\gamma(s)-1},\label{Kauffmanbracket}
\end{equation}
where the sum is taken over all states $s$ of the diagram,
$\alpha(s)$ and $\beta(s)$ denote the number of white and black
angles in the state (thus, $\alpha(s)+\beta(s)=n$ and $\gamma(s)$
denotes the number of curves in the state).

As it was said, the Kauffman bracket is invariant under the
virtualisation. Thus, it is not surprising that it can be read out
of the corresponding atom.

If the atom $A$ is obtained from a (framed) chord diagram $C$, then
one can construct the Kauffman bracket $\langle C(A)\rangle$.

Thus, one obtains a function $f$ on framed chord diagram valued in
Laurent polynomials in $a$. We shall return to that function because
it is connected to the Vassiliev invariants of knots and
$J$-invariants of closed curves (Lando, \cite{Lando}).

Assume now we have a framed graph (a graph with each vertex labeled
either positively or negatively).

By a {\em rotating circuit} of a framed graph $\Gamma$ we mean a map
from the oriented circle $S^{1}\to \Gamma$ which is homeomorphic
outside preimages of vertices of $\Gamma$, and each vertex of
$\Gamma$ has precisely two preimages such that the corresponding
neighbourhoods of them on the circle switch from one edge to an edge
not opposite to it.

Every rotating circuit gives rise to a framed chord diagram
associated with a graph: this chord diagram consists of the circle
$S^{1}$ and chords connecting those points of $S^{1}$ having the
same image in $\Gamma$. A chord is {\em positive} if for the
corresponding vertex the two emanating edges are opposite.

Then it may or may not be represented as an intersection graph of a
chord diagram (see \cite{Bouchet} for the details) for which it is
an intersection graph. Moreover, if such a chord diagram exists, it
should not be unique, see, e.g., Fig.~\ref{nonuniqueness}.

\begin{figure}
\centering\includegraphics[width=200pt]{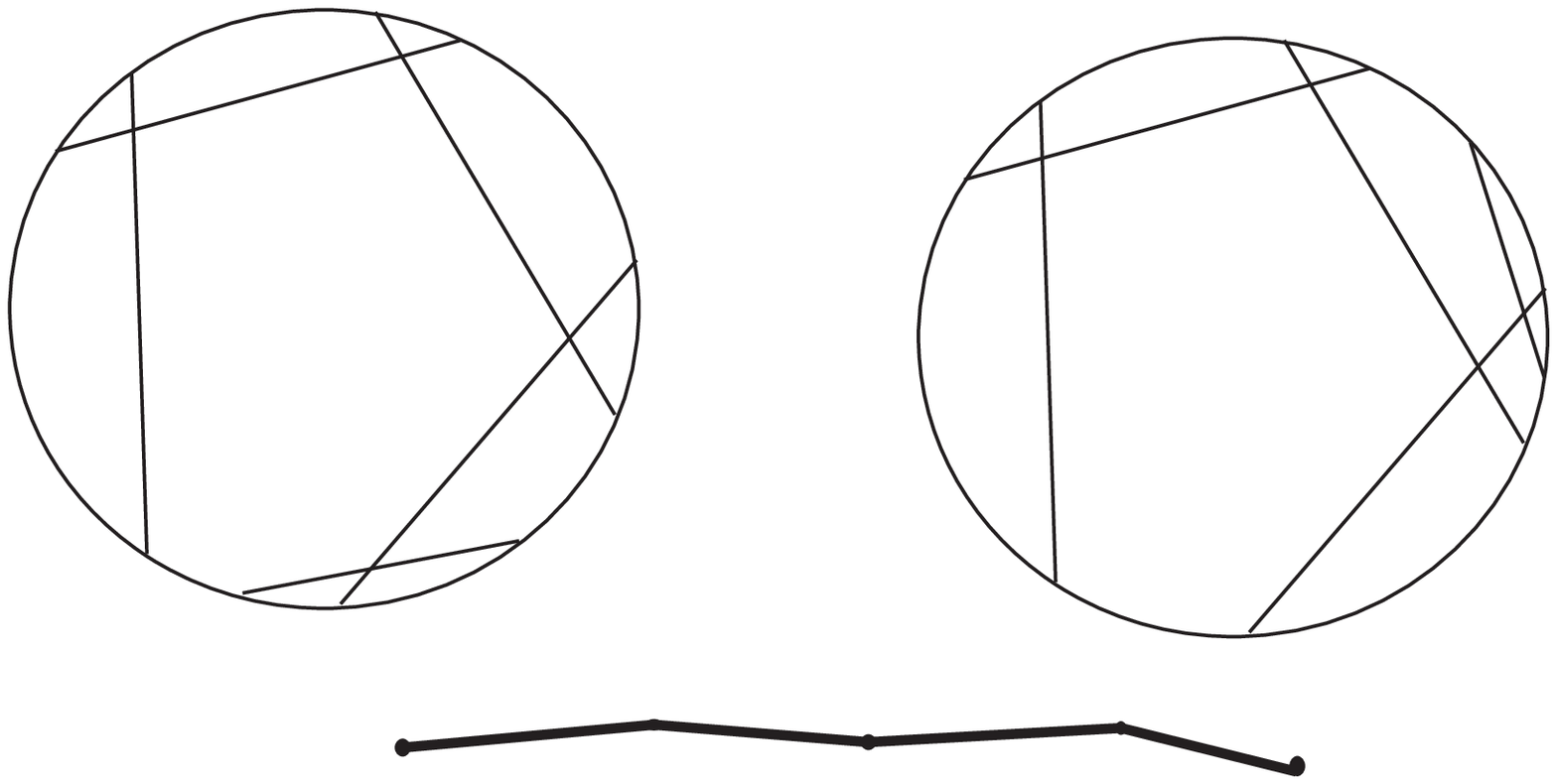} \caption{Two
chord diagrams with the same intersection graph}
\label{nonuniqueness}
\end{figure}

This non-uniqueness usually corresponds to so called {\em mutations}
of virtual knots.

The mutation operation (shown in the top of Fig. \ref{muta}) cuts a
piece of a knot diagram inside a box turn is by a half-twist and
returns to the initial position.

It turns out that the mutation operation is expressed in terms of
chord diagrams in almost the same way: one cuts a piece of diagram
with $4$ ends and exchanges the top and the bottom part of it (see
bottom picture of Fig.~\ref{muta}). Exactly this operation
corresponds to the mutation from both Gauss diagram and rotating
circuit points of view.

\begin{figure}
\centering\includegraphics[width=200pt]{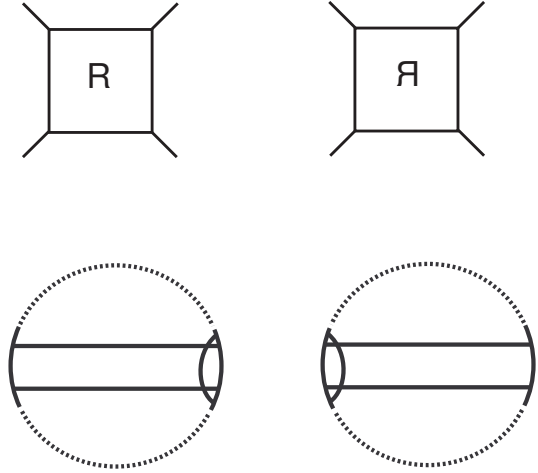} \caption{Mutation
and its chord diagram presentation} \label{muta}
\end{figure}

In the bottom part of Fig. \ref{muta} chords whose end points belong
to the ``dotted'' area remain the same; the other chords are
reflected as a whole.

Regarded from the point of view of Gauss diagrams and the Vassiliev
knot invariants, this non-uniqueness corresponds to {\em mutations}
of classical links as well. Namely, S.K.Lando and S.V.Chmutov
\cite{ChmutovLando} proved the following

\begin{thm}
A Vassiliev invariant does not detect mutant knots if and only if
the corresponding weight system depends only on the intersection
graph of the chord diagram.
\end{thm}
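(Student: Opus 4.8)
The proof has two essentially independent ingredients: a purely combinatorial equivalence between intersection graphs and mutation classes of chord diagrams, and a geometric bridge relating knot mutation to this combinatorial mutation. The plan is first to isolate the combinatorial statement as a lemma: \emph{two chord diagrams carry the same intersection graph if and only if they are related by a finite sequence of the chord-diagram mutations depicted in Fig.~\ref{muta}}. One direction is immediate — reflecting as a whole the block of chords lying outside the dotted region preserves every linking relation, since two reflected chords remain linked or unlinked exactly as before, two fixed chords are untouched, and the linking of a reflected chord with a fixed chord is unaffected because the move is performed along a configuration in which no chord has a single endpoint inside the swapped region. The converse, that any two chord diagrams sharing an intersection graph are connected by such mutations, is the substantial combinatorial input; I would establish it by induction on the number of chords, decomposing the diagram along its interlacement (connected) components and using the freedom to permute and reflect whole blocks that the mutation move provides. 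Granting the lemma, a weight system $w$ depends only on the intersection graph precisely when it is invariant under chord-diagram mutation, so it remains to connect this invariance to the behaviour of the associated Vassiliev invariant $v$ on mutant knots.

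For the forward implication, suppose $v$ does not detect mutant knots. By the lemma it suffices to prove $w(D)=w(D')$ whenever $D'$ arises from $D$ by a single chord-diagram mutation. I would realize $D$ as the chord diagram of a singular knot $K_D$ with exactly $n$ double points, where $n$ is the order of $v$, arranging the diagram so that the block of chords to be reflected is carried by a tangle — a \emph{share} in Conway's sense — which may be rotated. The chord-diagram mutation then becomes an honest mutation of singular knots producing a $K_{D'}$ whose diagram is $D'$. Since a Vassiliev invariant of order $n$ evaluates on a singular knot with $n$ double points to the value of its weight system on the corresponding chord diagram, and since $v$ is unchanged by mutation, we obtain $w(D)=v(K_D)=v(K_{D'})=w(D')$, so $w$ factors through the intersection graph.

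For the reverse implication, suppose $w$ depends only on the intersection graph, hence is mutation-invariant. Here the symbol alone is insufficient, because $v$ must agree on mutants to all orders and not merely in top degree; I would therefore pass to the universal Vassiliev invariant (the Kontsevich integral) $Z$, writing $v$ as the composition of $w$ with the relevant graded piece of $Z$. Knot mutation acts on $Z(K)$ degree by degree through exactly the chord-diagram mutation of the lemma, so that for mutant knots $K,K'$ the series $Z(K)$ and $Z(K')$ differ only by mutation within each homogeneous component. As $w$ is blind to this mutation, $w(Z(K))=w(Z(K'))$ and $v$ cannot separate $K$ from $K'$.

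The main obstacle is the geometric realization step in the forward direction: arranging a singular-knot diagram so that the prescribed block of chords is swept out by a genuine mutable share, and verifying that the resulting tangle rotation induces precisely the reflection of that block on the chord diagram rather than some other rearrangement. Controlling which chords acquire exactly one endpoint inside the share — and excluding this possibility — is exactly the point where the move of Fig.~\ref{muta} must be matched with Conway's geometric mutation. A secondary difficulty is the converse combinatorial claim in the lemma, whose inductive proof requires careful bookkeeping of the interlacement components and of how a reflection permutes them.
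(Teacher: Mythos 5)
The paper does not prove this theorem at all: it is quoted as a known result of Chmutov and Lando, with the proof deferred entirely to \cite{ChmutovLando}. So there is no in-paper argument to compare yours against; what you have written is a reconstruction of the Chmutov--Lando strategy, and in outline it is the right strategy. The forward direction as you describe it (realise the reflected block of chords as the double points of a singular knot carried by a $4$-ended tangle, rotate, and use that an order-$n$ invariant on an $n$-singular knot equals the weight system on its chord diagram) is sound, and the realization worry you flag is manageable precisely because the reflected chords are exactly those with both endpoints on the two non-dotted arcs of Fig.~\ref{muta}, so they can be swept into a tangle.

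There are, however, two genuine gaps. First, the combinatorial lemma --- that two chord diagrams have the same intersection graph if and only if they are related by mutations --- is the substantial theorem here, and the induction you sketch (on the number of chords, via interlacement components) will not go through as stated: a connected circle graph can be realised by non-isomorphic chord diagrams, so connectedness of the interlacement graph does not localise the ambiguity the way your induction needs. The actual proof runs through Cunningham's split (join) decomposition of graphs and Bouchet's theory of circle graphs; you need that machinery, not just block bookkeeping. Second, and more seriously, your reverse direction rests on the assertion that knot mutation acts on the Kontsevich integral ``degree by degree through exactly the chord-diagram mutation of the lemma.'' That assertion is the technical heart of the whole converse and cannot be taken for granted; proving it requires decomposing $Z$ along the tangle and controlling how the rotation acts on Jacobi diagrams with legs on the tangle skeleton. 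Moreover, your phrase ``the relevant graded piece of $Z$'' hides a real issue: an invariant of order at most $n$ is a functional of $Z$ truncated in \emph{all} degrees $\le n$, not of its top homogeneous component, while the hypothesis of the theorem constrains only the symbol (the degree-$n$ part). To conclude $v(K)=v(K')$ you must also control the lower-degree parts of that functional, which forces either an induction on the order or a more careful formulation of what ``the corresponding weight system'' means. As written, the reverse implication does not close.
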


It is well-known that the Kauffman bracket does not detect
mutations. Thus, one might guess that the corresponding Kauffman
bracket can be read out of the intersection graph.

Surprisingly, the Kauffman bracket can be defined in a meaningful
way even for those framed graphs which can not be represented as
intersection graphs of chord diagrams.

Having a chord diagram, we can treat the states of the corresponding
Kauffman bracket as collections of chords: we set the initial state
$s_0$ to be empty collection of curves (with
$\alpha(s)=n,\beta(s)=0$), and with each state $s$ we associate a
collection of chords correpsonding to those vertices of the atom
where $s$ differs from $s_0$.

Now, if we are able to calculate {\em how many circles we have in
each state}, we can apply  (\ref{Kauffmanbracket}) to calculate the
Kauffman bracket.

This can be seen from a chord diagram after introducing the notion
of {\em surgery along a chord}.

Given a chord diagram $D$ on $n$ circles $C_{1},\dots, C_{n}$. Fix a
chord $c$ of it. By {\em surgery along $c$} we mean the following
operation. We delete small neighbourhoods of endpoints of $c$ and
connect the obtained endpoints by segments in the following way.
There are $3$ ways of pairing the four points. One of them
corresponds to the disconnection we have performing. We choose
another way as follows. If $c$ is positive then we connect the
endpoints according to the orientation of circles, and if $c$ is
negative, we connect the endpoints in the way opposite manner, see
Fig. \ref{surgery}

\begin{figure}
\centering\includegraphics[width=200pt]{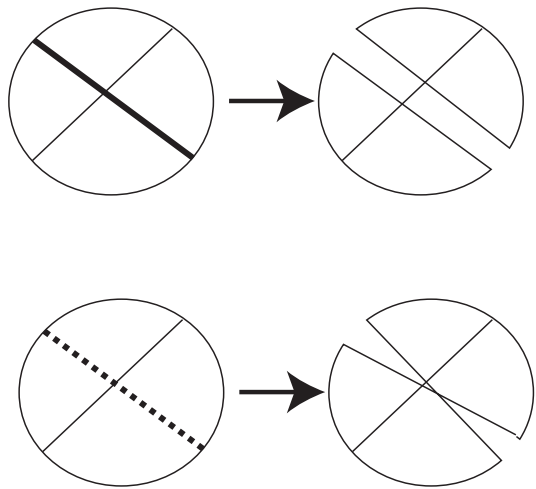}
\caption{Surgeries along positive and negative chords}
\label{surgery}
\end{figure}

Then we get a collection of circles, not necessarily oriented. If we
choose a collection of chords $c_{1},\dots, c_{k}$ of $C$, then the
surgery along these chords means the consequence of surgeries
performed along all chords $c_{i}$; in each case we look at the
orientations of the initial diagram $C$.

Assume the circle represents a boundary component of an annulus. By
adding a band to an annulus circle transforms its boundary component
according to a surgery along the chord corresponding to the band,
see Fig. \ref{surgeryband}.

\begin{figure}
\centering\includegraphics[width=200pt]{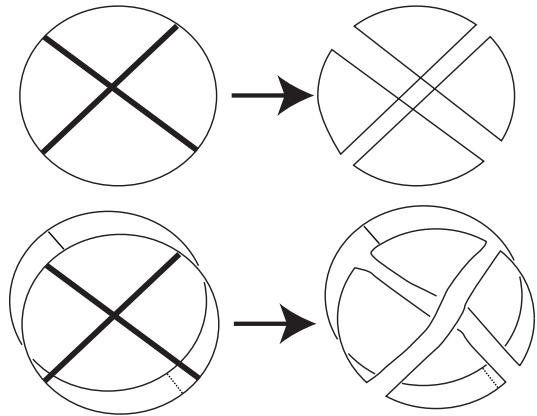} \caption{A
surgery and its band representation} \label{surgeryband}
\end{figure}

Thus, the number of circles in the state corresponding to the chords
$d_{1},\cdots, d_{n}$ is precisely the number of components of the
manifold obtained from the initial circle by surgery along these
chords.

Now, for a framed graph $G$ on $k$ enumerated vertices, introduce
the intersection matrix of $G$ to be $k\times k$ matrix over ${\bf
Z}_{2}$ whose rows and columns correspond to vertices of $G$ such
that $M_{ij}=M_{ji}=1$ for $i\neq j$ iff the vertices $i,j$ are
connected by an edge and $M_{ii}=1$ iff $i$-th vertex is framed
negatively.

Surprisingly, this number can be counted out of the intersection
graph even when the corresponding chord diagram does not exist, due
to the following

\begin{thm}[Soboleva,\cite{Soboleva}]
For a chord diagram $D$ with an intersection graph $G$ the number of
components of the manifold obtained from $D$ after a surgery along
chords $1,\dots, k$ is one plus the corank of $M(D)$.
\end{thm}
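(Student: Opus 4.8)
The plan is to interpret the $1$-dimensional surgery geometrically and then reduce the count of resulting circles to a rank computation over ${\bf Z}_{2}$ by means of Lefschetz duality. First I would realize the chord diagram $D$ as the boundary of a surface: let the underlying circle bound a disk, and attach to this disk a band (a $1$-handle) along each chord, untwisted if the chord is positive and half-twisted if it is negative, gluing the two feet of the band at the two endpoints of the chord. Call the resulting compact surface $\Sigma$. By the band description of surgery recalled above (Fig.~\ref{surgeryband}), the circles produced by simultaneous surgery along all $k$ chords are exactly the boundary components of $\Sigma$; writing $b$ for their number, the goal becomes $b = 1 + \mathrm{corank}(M(D))$.

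Next I would pass to ${\bf Z}_{2}$-homology. Since $\Sigma$ deformation retracts onto the disk with the $k$ band cores attached, it is homotopy equivalent to a wedge of $k$ circles, so $H_{1}(\Sigma;{\bf Z}_{2})\cong {\bf Z}_{2}^{k}$ with basis the classes $e_{1},\dots,e_{k}$ of the band cores, and $\Sigma$ is connected, giving $H_{0}(\Sigma;{\bf Z}_{2})\cong {\bf Z}_{2}$. I would then invoke the long exact sequence of the pair $(\Sigma,\partial\Sigma)$,
\[
H_{1}(\Sigma)\xrightarrow{\ j_{*}\ }H_{1}(\Sigma,\partial\Sigma)\xrightarrow{\ \partial\ }H_{0}(\partial\Sigma)\xrightarrow{\ i_{*}\ }H_{0}(\Sigma)\to 0,
\]
all with ${\bf Z}_{2}$-coefficients. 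Here $H_{0}(\partial\Sigma)\cong{\bf Z}_{2}^{b}$ and $i_{*}$ is onto with kernel of dimension $b-1$, while Lefschetz duality identifies $H_{1}(\Sigma,\partial\Sigma)\cong H^{1}(\Sigma)\cong {\bf Z}_{2}^{k}$. A diagram chase then gives $b-1=\dim \mathrm{im}(\partial)=\dim H_{1}(\Sigma,\partial\Sigma)-\dim\ker(\partial)=k-\mathrm{rk}(j_{*})$.

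The crux is to identify the map $j_{*}$ with the matrix $M(D)$: under the dual basis, $j_{*}$ is precisely the mod-$2$ intersection form of $\Sigma$, whose $(i,j)$-entry records the intersection number of the band cores $e_{i}$ and $e_{j}$. I expect this to be the main obstacle, and I would verify it locally: two cores meet transversally an odd number of times exactly when the corresponding chords are linked, yielding the off-diagonal entries $M_{ij}$, whereas the self-intersection of $e_{i}$ detects whether band $i$ is twisted, i.e.\ whether chord $i$ is negative, yielding $M_{ii}$. Granting this, $\mathrm{rk}(j_{*})=\mathrm{rk}(M(D))$ and the displayed computation gives $b-1=k-\mathrm{rk}(M(D))=\mathrm{corank}(M(D))$, as claimed. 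As a more elementary alternative avoiding duality, I would run an induction on $k$: attaching one band changes $b$ by $\pm1$ and changes $\mathrm{rk}(M)$ compatibly, the sign being governed by whether the new chord's endpoints lie on one or two current boundary circles and by its framing, with the base case $k=0$ giving the single circle and the ``$+1$''. The bookkeeping in this case analysis, matching the geometric change of $b$ against the change of the ${\bf Z}_{2}$-rank, is where the real work lies.
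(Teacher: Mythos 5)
The paper does not prove this theorem at all: it is quoted from Soboleva's article \cite{Soboleva} as a known result, so there is no internal argument to compare yours against. Your main argument, however, is correct and is essentially the standard proof of this statement. Realizing $D$ as a disk with $k$ bands (half-twisted for negative chords) so that the surgered circles become $\partial\Sigma$ is exactly the band picture the paper invokes around Fig.~\ref{surgeryband}; the long exact sequence of $(\Sigma,\partial\Sigma)$ with ${\bf Z}_{2}$-coefficients together with Lefschetz duality (valid over ${\bf Z}_{2}$ even when $\Sigma$ is non-orientable, which matters here) gives $b-1=k-\mathrm{rk}(j_{*})$; and the identification of $j_{*}$ with the mod-$2$ intersection form is right, since the cores meet once in the disk exactly when the chords are linked and the self-intersection of a core is $1$ exactly when its band is half-twisted, reproducing the paper's definition of $M(D)$ including the diagonal. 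One caveat: in your ``more elementary alternative'' the claim that attaching a band always changes $b$ by $\pm1$ is false --- attaching a half-twisted band with both feet on one boundary circle leaves $b$ unchanged (the disk becomes a M\"obius band, still with one boundary circle), and correspondingly $\mathrm{rk}(M)$ can jump by $0$, $1$, or $2$ when a chord is added. The induction can still be pushed through, but the case analysis is genuinely three-way, not two-way; since your primary argument does not rely on this sketch, the proof as a whole stands.
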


Now, we just define for a framed graph $\Gamma$ the Kauffman bracket
as

\begin{equation}
\sum_{G'\subset G}a^{2|G'|-n}(-a^{2}-a^{-2})^{corank M_{\Gamma'}}
 \label{Kauffmanbracketforgraphs}
\end{equation}

Soboleva's theorem allows to reformulate Problem 1 as Problem 2.
Indeed, let $\Gamma$ be a framed $4$-graph. We are looking for an
embedding of $\Gamma$ into a surface $S$ of minimal (maximal) genus
with a checkerboard face colouring. Choose a rotating circuit of
$\Gamma$ and a corresponding framed chord diagram $C(\Gamma)$.

Assume $\Gamma$ is embedded in a certain surface $S$. Then
$C(\Gamma)$ yields a mapping $S^{1}\to S$ which is an embedding
outside pre-images of vertices of $\Gamma$. This map can be slightly
smoothed to give an embedding as shown in Fig. \ref{blackwhite}.

\begin{figure}
\centering\includegraphics[width=200pt]{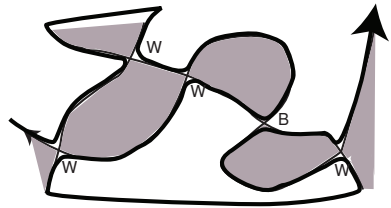} \caption{A
circuit splits chords into two sets} \label{blackwhite}
\end{figure}

Obviously, this circle $S^{1}\in S$ is {\em separating}: it divides
the surface $S$ into the ``white part'' and the ``black part''. We
can draw all chords of $C(\Gamma)$ as small edges on $S$ lying in
neighbourhoods of vertices of $\Gamma$. Thus, all chords of the
chord diagram $C(\Gamma)$ are naturally split into two families:
those connecting white regions and those connecting black regions.

Vice versa, any splitting of chords of $C(\Gamma)$ into two families
(black and white) gives rise to a checkerboard colourable embedding
of $\Gamma$ into a certain surface $S$. Indeed, consider an annulus
$S^{1}\times I$ and let $S^{1}$ be the medial circle of this
annulus. Now, we attach bands to two sides of the annulus according
to the splitting; a band is overtwisted iff the corresponding chord
is negative. This leads to a manifold $M$ with boundary; this
boundary naturally splits into two parts corresponding to the
boundary components of the annulus. Gluing the boundary components
of $M$, we get the desired surface $S$ without boundary, see Fig.
\ref{attachbands}.

\begin{figure}
\centering\includegraphics[width=400pt]{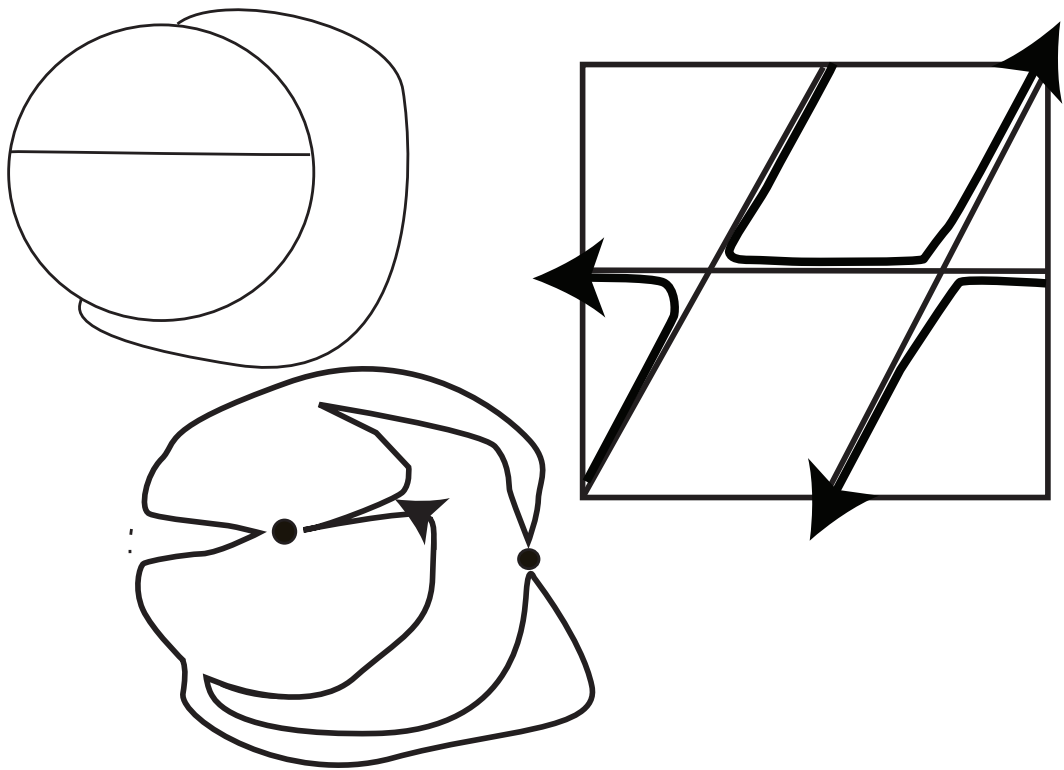}
\caption{Restoring the atom from a chord diagram with two sets of
chords} \label{attachbands}
\end{figure}

Thus, the question of estimating the genus (Euler characteristic) of
$S$ is equivalent to the question of maximising (minimising) the
boundary components of $S$. By definition, this is nothing but
counting the number of components of the two $1$-manifolds obtained
from the sphere by a surgery along the set of chords. By Soboleva's
formula, we have two subsets of chords $I$ and $J$, and we should
take two coranks of the adjacency matrices $M_{I}$ and $M_{J}$.

Thus, we have to find a way of splitting the chords in order to
maximise (minimise) the sum of ranks of the two matrices $rank
M_{I}+rank M_{J}$.

\begin{re}
Note that this solution does not depend on a particular choice of a
rotating circuit, depending only on the initial framed graph.
\end{re}

\begin{st}
Given a framed $4$-graph $\Gamma$ and the chord diagram $C(\Gamma)$
corresponding to some rotating circuit of $\Gamma$. Then if all
chords of $C(\Gamma)$ are positive then {\em all} checkerboard
colourable embeddings of $\Gamma$ yield orientable surface. If at
least one chord of $\Gamma$ is negative then all such surfaces are
non-orientable. \label{orientability}
\end{st}

\begin{re}
Note that the statement above means, in particular, that if {\em for
some} circuit the chord diagram contains a negative chord, then so
are all diagrams corresponding to all rotating circuits for the same
graph.
\end{re}

\subsection{The source-target condition}

The above condition can be reformulated intrinsically in the terms
of graph.

\begin{dfn}
We say that a four-valent framed  graph satisfies the {\em
source-target condition} if each edge of it can be endowed with an
orientation in such a way that for each vertices some two opposite
edges are emanating, and the remaining two edges are incoming.
\end{dfn}

Obviously, for a given graph there exists at most one source-target
structure (up to overall orientation reversal of all edges).
Moreover, if such a structure exists, then it agrees with {\em any}
rotating circuit. Namely, starting with a rotating circuit one may
try to orient its edges consequently in order to get a source-target
structure of the whole framed graph. The only obstruction one gets
in this direction corresponds to negative chords.

From the above, we get the following
\begin{thm}
A graph admits a source-target structure if and only if it is
checkerboard-embeddable into an orientable surface; in other words,
a source-target structure means that for any rotating circuit all
chords are positive.
\end{thm}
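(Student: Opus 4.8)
The plan is to deduce the theorem from Statement~\ref{orientability}, which already identifies orientable checkerboard-embeddability with positivity of all chords of $C(\Gamma)$ for a (hence any) rotating circuit. Thus it suffices to prove the purely combinatorial equivalence: $\Gamma$ admits a source-target structure if and only if, for every rotating circuit, all chords of the associated framed chord diagram are positive. The bridge between the two sides is the observation that any rotating circuit $C$, being a single traversal of the oriented circle that covers each edge exactly once, induces an orientation on the edges of $\Gamma$ (orient each edge in the direction of traversal). At every vertex the two passages of $C$ then produce two incoming and two outgoing half-edges, and by the definition of chord positivity the chord at that vertex is positive precisely when the two outgoing half-edges are opposite.

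The easy direction is immediate: if some rotating circuit has all chords positive, then at each vertex the two outgoing half-edges of the induced orientation are opposite, so this orientation satisfies the source-target condition, and $\Gamma$ admits a source-target structure.

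For the converse, suppose $\Gamma$ carries a source-target orientation $\mathcal{O}$, so that at each vertex the incoming pair $\{a,c\}$ and the outgoing pair $\{b,d\}$ are each a pair of opposite half-edges. First I record a local fact: a rotating circuit, by the rotating condition, joins at each passage two half-edges that are adjacent (non-opposite), and any adjacent pair contains exactly one member of $\{a,c\}$ and one of $\{b,d\}$; hence each passage pairs one $\mathcal{O}$-incoming half-edge with one $\mathcal{O}$-outgoing half-edge. Now label each edge, as $C$ traverses it, by whether the direction of traversal agrees with $\mathcal{O}$ (``with'') or is opposite (``against''). The core step is to check that this label is preserved across every passage: if $C$ enters along the $\mathcal{O}$-incoming half-edge it must, by the local fact, leave along the $\mathcal{O}$-outgoing one, and both moves run with $\mathcal{O}$; if it enters along the $\mathcal{O}$-outgoing half-edge it leaves along the $\mathcal{O}$-incoming one, and both moves run against $\mathcal{O}$. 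Since $C$ is a single closed circuit covering all of $\Gamma$, the label is therefore globally constant, so the induced orientation equals $\mathcal{O}$ or its reverse. In either case the two outgoing half-edges at each vertex coincide with one of the opposite pairs $\{b,d\}$ or $\{a,c\}$, whence every chord is positive.

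I expect the local-to-global propagation in the converse to be the main obstacle: the positivity of an individual chord is not forced by the existence of $\mathcal{O}$ alone (a passage could a priori exit along either of its two half-edges), and it is only the rigidity of a connected rotating circuit — combined with the fact that the source-target pairs are exactly the two opposite pairs, so that each adjacent passage straddles them — that forces the ``with/against'' sense to remain constant around the whole loop. Once this constancy is established the conclusion is routine, and together with Statement~\ref{orientability} it yields the stated equivalence with orientable checkerboard-embeddability.
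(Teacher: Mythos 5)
Your proposal is correct and takes essentially the same route as the paper, which (in the paragraph preceding the theorem) also argues that a rotating circuit induces an edge orientation whose only obstruction to being a source-target structure is a negative chord, and then invokes Statement~\ref{orientability} to pass to orientable checkerboard-embeddability. You merely spell out the local-to-global propagation of the ``with/against'' label that the paper leaves implicit.
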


\subsection{The planar case: Vassiliev's conjecture}

For a $4$-graph $\Gamma$ to be embedded in ${\bf R}^{2}$ (or
$S^{2}$), take a chord diagram $C(\Gamma)$ corresponding to some
rotating circuit of $\Gamma$, and consider the adjacency matrix
$M_{C({\Gamma})}$. A simple calculation shows that the corresponding
sum of ranks should be {\em the minimal possible}, i.e., equal to
zero. That means that all chords of $\Gamma$ are positive (otherwise
we would have diagonal non-zero entries giving rank at least $1$).
Moreover, the chords should constitute two families of
non-intersecting chords (each family forming a submatrix of rank
$0$). This means that the corresponding intersection graph is
bipartite or the diagram is a $d$-diagram.

Now, one can check that for a diagram with a negative chord $c$
there is a Vassiliev obstruction consisting of two circuits having
precisely one intersection point at the vertex of $\Gamma$
corresponding to $c$. Besides, for a chord diagram which is not a
$d$-diagram, one can explicitly construct a Vassiliev obstruct. This
leads to a proof of Vassiliev's conjecture. For more details see
\cite{VassConj}.

Note that the above considerations lead to a fast (quadratic on the
number of chords) algorithm of planarity recognition: one takes any
circuit, checks whether all chords are positive, and then checks
that a diagram is a $d$-diagram. The latter consists of possible
splitting of all chords into two disjoint sets, which is unique for
chord diagrams with connected intersection graphs.

\subsection{The case of ${\bf R}P^{2}$}

Here we shall use the adjacency matrix. According to Statement
\ref{orientability}, the adjacency matrix should have at least one
non-zero diagonal element. Without loss of generality, assume it is
$(1,1)$. Since we are looking for a splitting of $\{1,\dots, n\}$ in
order to get rank $1$, all elements entries $m_{jj}=1$ should belong
to the same set. Without loss of generality, assume $a_{11}=\dots =
m_{kk}=1, m_{k+1,k+1}=\dots m_{nn}=0$. Now, merge some subset
$\{k+1,\dots, n\}$ with $\{1,\dots, n\}$ and leave the remaining
part as it is in order to get the total rank $1$. The remaining part
should thus have rank $0$, while the former should not increase rank
$1$ formed by the first $k$ entries of the matrix. This can be done
by the procedure similar to finding $d$-diagrams. The generic
diagram corresponding to ${\bf R}P^{2}$ looks as follows (see Fig.
\ref{asfollows}): there is a family of dashed chords (all
intersecting each other) and two families of pairwise disjoint
chords; chords belonging to one family do not intersect dashed
chords.

\begin{figure}
\centering\includegraphics[width=200pt]{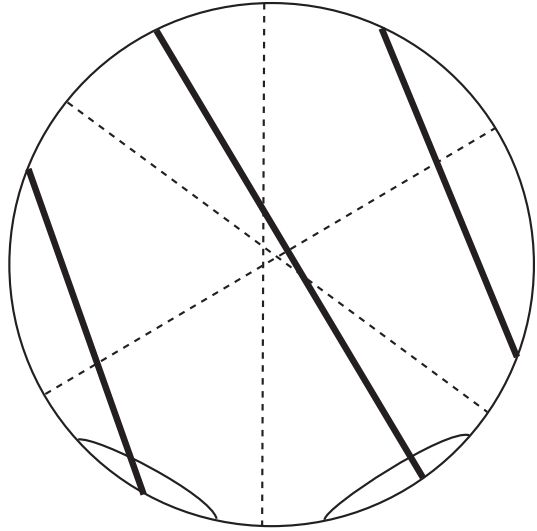} \caption{A
generic framed chord diagram corresponding to a graph in ${\bf
R}P^{3}$} \label{asfollows}
\end{figure}

In Fig. \ref{asfollows} solid chords from another family are
represented by thicker lines than chords belonging to the family
containing all dashed chords.

Obviously, the algorithm described in the present section has
quadratic complexity.

\subsection{The case of the Klein bottle}

The main idea of detecting the Klein bottle embeddability is the
following. While seeking the minimal rank $2$ there might be two
possibilities: either $2=1+1$ (which corresponds to the Klein bottle
represented as a connected sum of two projective planes or $0+0$;
the first case is easier, and it turns out that the general case can
be reduced to it).

\begin{lm}
For every four-valent framed graph embeddable in ${\bf KL}^{2}$
there exists a rotating circuit dividing ${\bf KL}^{2}$ into two
copies of ${\bf R}P^{2}$.
\end{lm}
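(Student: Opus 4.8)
The plan is to leverage the equivalence, established earlier in the excerpt, between checkerboard-colourable embeddings of a framed $4$-graph $\Gamma$ and splittings of the chords of $C(\Gamma)$ into two families $I \sqcup J$, together with Soboleva's formula. An embedding into the Klein bottle ${\bf KL}^2$ is characterised by the total genus being accounted for by Euler characteristic zero with non-orientability, which via the rank formula corresponds to a splitting achieving $\operatorname{rank} M_I + \operatorname{rank} M_J = 2$. Since the surface is non-orientable (it is ${\bf KL}^2$), Statement \ref{orientability} guarantees that $C(\Gamma)$ contains at least one negative chord, so at least one of $M_I$, $M_J$ has a nonzero diagonal entry and hence contributes rank at least $1$ to each side into which that chord's family falls.

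\medskip

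\noindent\textbf{The two cases.} First I would isolate the two ways the total rank $2$ can be distributed, exactly as flagged just before the lemma: either $2 = 1+1$ or $2 = 2+0$. In the $1+1$ case, each of the two families contributes rank $1$, meaning each of the two pieces of the surface obtained by cutting along the rotating circuit is itself a copy of ${\bf R}P^2$ (by the rank-$1$ analysis of the ${\bf R}P^2$ case), so the rotating circuit already realises the desired decomposition ${\bf KL}^2 = {\bf R}P^2 \# {\bf R}P^2$ directly. The genuine work is therefore confined to the $2+0$ case, where one family (say $J$) has rank $0$ and the other family ($I$) has rank $2$; here the circuit splits the Klein bottle into a disc-like (planar) part and a genus-two-carrying part, not into two projective planes, so the given circuit does \emph{not} witness the claim and must be modified.

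\medskip

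\noindent\textbf{Reducing $2+0$ to $1+1$.} The heart of the argument is to show that whenever a $2+0$ splitting exists one can pass to a \emph{different} rotating circuit whose associated splitting is of the $1+1$ type. The mechanism for changing the circuit without changing the underlying graph is exactly the non-uniqueness / mutation freedom of chord diagrams discussed around Fig.~\ref{nonuniqueness}: different rotating circuits of the same $\Gamma$ give different chord diagrams with the same underlying embeddability data. Concretely, I would take the rank-$2$ family $M_I$ and, since the surface is non-orientable, locate a negative chord contributing a diagonal $1$; I then perform a local recombination of the circuit (a re-routing at the corresponding vertex, in the spirit of the surgery/band picture of Fig.~\ref{surgeryband}) that transfers one unit of rank from the $I$-side to the $J$-side, arriving at a $1+1$ splitting while keeping the total rank fixed at $2$. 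One must check that such a re-routing is always available — i.e.\ that a rank-$2$ non-orientable configuration can always be rebalanced so that each side carries one negative (diagonal) contribution — and that the new circuit is still a legitimate rotating circuit of $\Gamma$.

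\medskip

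\noindent I expect the main obstacle to be precisely this last rebalancing step: proving that the $2+0$ matrix configuration can always be converted, by an admissible change of rotating circuit, into a $1+1$ configuration. The difficulty is combinatorial and linear-algebraic at once — one must control how $\operatorname{rank} M_I$ and $\operatorname{rank} M_J$ respond to a local circuit change (which alters the partition $I \sqcup J$ and possibly the framing pattern of chords), and one must rule out the degenerate possibility that every rank-$2$ embedding of $\Gamma$ is intrinsically ``planar-plus-a-handle'' with all non-orientability concentrated in a single family. Verifying that the genus-$2$ non-orientable contribution genuinely \emph{splits} as a connected sum — rather than being an irreducible rank-$2$ block that cannot be separated by any choice of circuit — is the crux, and I would handle it by an explicit normal-form analysis of the symmetric ${\bf Z}_2$-matrix $M_I$ of corank $\operatorname{rank} M_I - 2$ (pairing the negative diagonal with an off-diagonal $1$ to exhibit an ${\bf R}P^2 \# {\bf R}P^2$ block), mirroring the generic diagram description used in the ${\bf R}P^2$ case around Fig.~\ref{asfollows}.
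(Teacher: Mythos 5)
Your overall plan is aligned with the paper's: the $1{+}1$ case needs no work, and the content of the lemma is the reduction of the $2{+}0$ case, which both you and the paper propose to achieve by a local modification of the rotating circuit at a vertex carrying a negative (dashed) chord; your use of Statement \ref{orientability} to guarantee such a chord exists is also correct. However, what you have written is an outline rather than a proof: you yourself flag the rebalancing step as ``the crux'' and only promise a ``normal-form analysis'' of $M_I$ without carrying it out. Since that step is the entire content of the lemma, this is a genuine gap.

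The gap is compounded by a conceptual slip in how the modification is described. Changing the rotating circuit at a vertex is a surgery on the chord diagram: it does not merely move one index from $I$ to $J$ while leaving the matrices alone, but produces a new chord diagram in which the linking relations and framings of the \emph{other} chords are altered according to the surgery rules. Consequently ``transferring one unit of rank from $M_I$ to $M_J$'' is not an elementary, obviously available operation on the pair of matrices, and a normal form of $M_I$ alone cannot certify that some admissible circuit change realizes it. The paper's own argument sidesteps the linear algebra entirely and is purely topological: start from the rotating circuit bounding a disc (the one-white-cell circuit, with all chords on one side) and perform a single surgery along a dashed chord; the point of Fig.~\ref{seefig2} is that this surgery converts the disc into a M\"obius band, so the resulting circuit has a cross-cap on each of its two sides and therefore splits ${\bf KL}^{2}$ as ${\bf R}P^{2}\#{\bf R}P^{2}$. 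To complete your version you would need to prove that the surgery along a negative chord always turns a $2{+}0$ splitting into a $1{+}1$ splitting, which is essentially a matrix restatement of that geometric observation --- so the missing step is real and is exactly where the paper places the whole weight of the proof.
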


\begin{proof}
Starting with a rotating circuit bounding a disc, one can get a
desired circuit by performing surgery along a dashed chord, see Fig.
\ref{seefig2}.

\begin{figure}
\centering\includegraphics[width=250pt]{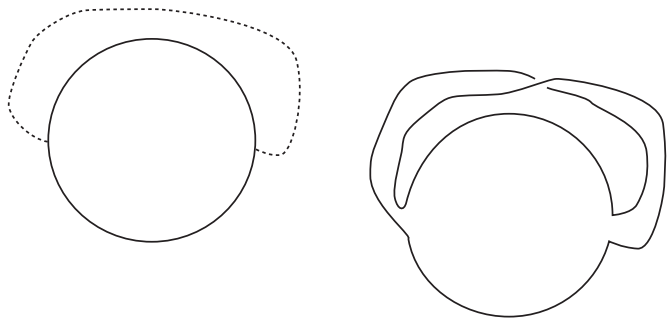} \caption{A
surgery along a dashed chord generates a M\"obius band}
\label{seefig2}
\end{figure}

\end{proof}

Then, the procedure is as follows: we take a dashed chord, perform a
surgery and look whether the intersection matrix corresponding to
the obtained chord diagram is splittable into two families giving
the desired decomposition.

The desired decomposition should be such that inside each family any
two dashed chords intersect, and any non-dashed chord is disjoint
from any other chord. Thus, the procedure of finding two families is
exactly the same as in the case of $d$-diagrams, except for the case
of incidence of dashed chords.

\subsection{The chord diagram algebra and the graph algebra}

Chord diagrams play a crucial role in the study of finite-type
(Vassiliev) invariants of knots \cite{Vass,BN}. Roughly speaking,
for every positive integer $n$ there is a class of invariants of
degree $n$ whose ``leading term'' (called symbol or $n$-th
derivative) is a function on chord diagrams satisfying a certain
relation. Invariants of the same order having the same leading term
differ by an invariant of a strictly smaller order (like polynomials
of degree $n$ whose $n$-th derivative coincide).

There are two versions of the chord diagram algebra: for usual knots
(with two sorts of relations, the four-term (see ahead) and the
one-term) and for framed knots (with only the four-term relation).

We shall restrict ourselves for the case of only four-term relation
which is defined as follows (see Fig. \ref{4T}): given four diagrams
on $n$ chords each whose $n-2$ chords coincide (they are not
depicted in Fig. \ref{4T} and have endpoints in punctured areas) and
the disposition of the remaining two chords, $\alpha$ and $\beta$ is
as shown in Fig. \ref{4T}.

\begin{figure}
\centering\includegraphics[width=400pt]{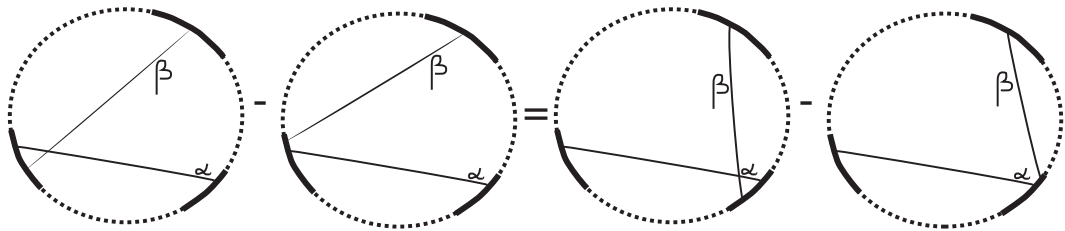} \caption{The
four-term relation} \label{4T}
\end{figure}

\begin{re}
There exists a standard ``deframing'' procedure which associates
with each weight system satisfying only the $4T$-relation a weight
system satisfying both the $4T$ and the $1T$-relation. The latter
means that the diagram containing a solitary chord is equal to zero.
\end{re}

We define the linear space ${\cal A}^{c}_{n}$ (over ${\bf Q}$) as
the quotient space of all chord diagrams on $n$ chords modulo the
four-term relation.

It turns out that the space ${\cal A}^{c}=\oplus_{i=0}^{\infty}{\cal
A}^{c}_{n}$ has an algebra (and even bialgebra and Hopf algebra)
structure. Namely, for multiplication of two chord diagrams we break
each of them at some point and glue together with respect to the
orientation. This operation is well-defined modulo $4T$-relation
\cite{MyBook}.

The comultiplication operation for a chord diagram $C$ is defined as
$\Delta(C)=\sum_{I\sqcup J}C_{I}\otimes C_{J}$, where the sum is
taken over all possible ways to split the total set of chords into
two subsets $I$ and $J$, and we take the subdiagrams $C_{I}$ and
$C_{J}$ formed by these sets.

Having the intersection graph mapping, one can define the analogous
operations on graphs. The multiplication operation is defined even
easier than in the case of chord diagram: there is no need to prove
its well-definiteness. On the other hand, one can define (following
Lando, \cite{CDL}) the $4T$-relation. It is defined as follows.

To make the definition of the graph algebra precise, we have to
describe the correspondence between the $4$ terms $A,B,C,D$ in the
graph-theoretic language. Since vertices of the graph correspond to
chords, the diagrams $A$ and $B$ differ just by one chord: in $A$,
the vertices $\alpha$ and $\beta$ are connected by an edge, whence
for $B$ they are not. The same for $C$ and $D$. It remains only to
explain how to get $C$ from $\alpha$. In the chord diagram we moved
one end of the chord $\beta$ from one end of the chord $\alpha$ to
the other while passing from $A$ to $C$, see Fig. \ref{4T}. This
means that the chord $\beta$ changes its incidence with all chords
incident to $\alpha$ and does not change its incidence with chords
which are not incident to $\alpha$.

\begin{re}
Note that the surgery over $A$ and the surgery over $C$ lead to the
same number of circles; the same is true about $B$ and $D$.
\end{re}

It turns out that many nice functions defined on graphs (e.g., the
Tutte polynomial) satisfy the $4T$-relation. We shall touch on such
functions while defining the generating function for counting genera
of surfaces spanning a given graph.

It turns out that the chord diagram algebra has its meaningful
``framed analogue''. It is connected to so-called finite-type
invariants of plane curves, see \cite{Lando} for details. The
definition goes as follows.

Consider the set of all framed chord diagrams on $n$ chords. Now, we
set $A^{cf}_{n}$ to be the ${\bf Q}$-linear space generated by all
such chord diagrams subject to the generalised $4T$-relations
depicted in Fig. \ref{gen4T}.

\begin{figure}
\centering\includegraphics[width=300pt]{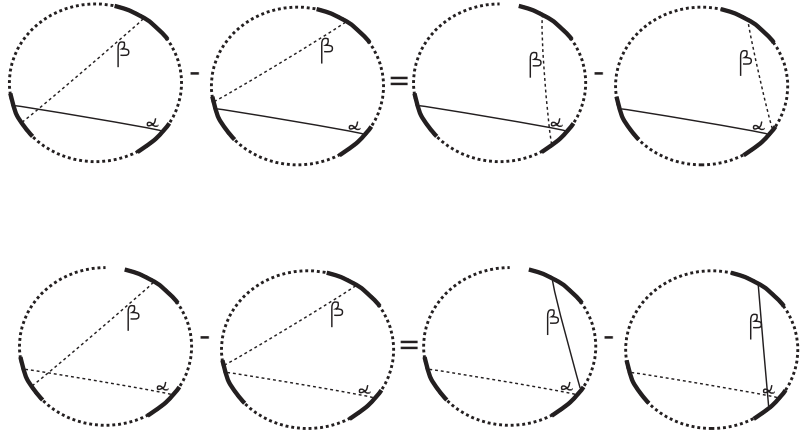}
\caption{Two types of generalized 4-term relations} \label{gen4T}
\end{figure}

The general rule for the generalized four-term relation is as
follows: we have $n-2$ fixed chords and two chords, $\alpha$ and
$\beta$. If $\alpha$ is positive then in all chord diagrams
$A,B,C,D$, the chord $\beta$ is of the same sign (in all cases
positive or in all cases negative), and the relation looks just as
the usual four-term relation: $A-B=C-D$. If $\alpha$ is negative,
then while moving from $A,B$ to $C,D$ the chord $\beta$ changes its
sign; moreover, the RHS changes the overall sign: $A-B=D-C$, that
means that if in the LHS we take the chord diagram with intersecting
$\alpha,\beta$ with plus then in the RHS we take the diagram with
intersecting $\alpha,\beta$ with minus.

One can easily check that in any special case of the generalized
$4T$-relation, the surgery along $A$ gives the same number of
circles as the surgery along the diagram with plus in the right hand
side ($C$ or $D$) whence the surgery along $B$ gives the same number
of circles as the surgery along the diagram with minus from the RHS.

To the best of the author's knowledge, the connected sum operation
on the framed chord diagram algebra is not proved to be
well-defined. Of course, the coalgebraic operation is well-defined.

Analogously, one defines the bialgebra of framed graphs (at the
level of graphs, there is no problem to define the product, we omit
the exact definition leaving it for the reader as an exercise).

\section{The generating function for the embedding genera}

 \subsection{Weight systems associated with Lie algebras: a brief
 review}

There is a natural way of associating a number with a given chord
diagram and a given representation of a (semisimple) Lie algebra due
to Bar-Natan, \cite{BN}. It turns out that the corresponding mapping
naturally extends (for a fixed representation $R$ of a fixed Lie
algebra $G$) to the mapping from the algebra of chord diagram ${\cal
A}^{c}$ to ${\bf Q}$ because of the similarity of the $4T$-relation
and the Jacobi identity.

We shall deal only with the adjoint representation of Lie algebras;
for a Lie algebra $G$ we denote the corresponding mapping from
${\cal A}^{c}$ to ${\bf Q}$ by $W_G$.

The construction goes as follows. Every chord diagram is a cubic
graph immersed in the plane with prefixed rotation direction at each
vertex (when drawing chord diagrams on the plane we assume this
rotation to be counterclockwise). We shall enlarge the construction
for arbitrary cubic graphs with rotation. Namely, we take the
structural tensor $L_{ijk}$ of the Lie algebra ${\bf G}$ with all
indices shifted down by using the Cartan-Killing metric. Obviously,
$L_{ijk}=L_{jki}=L_{kij}$. Now, we can associate with each trivalent
vertex the tensor $L_{ijk}$ with indices corresponding to edges and
going counterclockwise $i,j,k$. Then we contract all tensors along
edges (of course, by using Cartan-Killing metric tensor) and get an
integer.

We specify ourselves for the case of the Lie algebra $sl(n)$ and its
adjoint representation.

In \cite{Diplomarbeit}, see also \cite{MitRutwig}, we proved the
following

\begin{thm}
For a given chord diagram $D$ on $n$ chords, $W_{sl_{n}}(D)$ is a
polynomial in $n$; its degree does not exceed $k+2$; moreover, it is
equal to $k+2$ only in the case when $D$ is a $d$-diagram.
\end{thm}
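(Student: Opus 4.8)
The plan is to analyze $W_{sl_n}(D)$ via the standard state-sum expansion of the adjoint $sl(n)$ weight system. The key tool here is the diagrammatic rule for the structural tensor of $sl(n)$: the Cartan-Killing contraction of two trivalent vertices can be resolved into a combination of planar diagrams. Concretely, for $sl(n)$ the relevant identity (essentially the $\mathfrak{gl}(n)$ double-line / ribbon calculus of \cite{BN}) replaces each chord (a pair of trivalent vertices joined along the Wilson loop) by a signed sum of ways of reconnecting the ribbon strands, with each closed loop in the resulting collection of curves contributing a factor of $n$. Thus after fully resolving all $k$ chords one obtains $W_{sl_n}(D) = \sum_{\text{resolutions}} (\pm) \, n^{c(\text{resolution})}$, where $c$ counts the closed loops of a surface built from the Wilson circle with $k$ bands attached.

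First I would make this surface picture precise: thickening the Wilson circle to an annulus and each chord to a band gives exactly the banded surface appearing in the surgery discussion earlier in the paper, so the number of boundary loops $c$ in the maximal-loop resolution is governed by Soboleva's theorem. The degree of the polynomial in $n$ is the maximum of $c$ over all resolutions, and one resolution (the one that respects the orientation at every band, i.e.\ the ``positive'' surgery) dominates because it maximizes the loop count. I would argue that the starting circle contributes one loop and each band can raise the loop count by at most one, giving the crude bound $c \le k+1$; a careful bookkeeping of the double-line structure (each chord carries two parallel strands, and the closure of the Wilson loop itself contributes the extra strand-doublings) upgrades this to the stated bound $\deg W_{sl_n}(D) \le k+2$. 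This is the place where I would invoke Soboleva's formula directly: the top power of $n$ is $1 + \mathrm{corank}(M)$ computed over the resolution that attaches all bands compatibly, so the maximal loop count equals $k+2$ precisely when the intersection matrix $M(D)$ has full corank in the appropriate sense.

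The second half — that equality $\deg = k+2$ holds \emph{if and only if} $D$ is a $d$-diagram — is where the real content lies, and I expect it to be the main obstacle. The forward direction should follow by showing that if $D$ is a $d$-diagram then its intersection graph is bipartite, so the chords split into two families of pairwise non-intersecting chords; in the dominant resolution this bipartite structure is exactly what forces the corank to be maximal and the loop count to reach $k+2$. For the converse I would argue contrapositively: if $D$ is not a $d$-diagram, then either some pair of chords is linked in a way that cannot be two-coloured (an odd cycle in the intersection graph), or the matrix $M$ acquires a rank-increasing configuration that strictly lowers the corank, hence lowers the top loop count to at most $k+1$. Translating ``not bipartite'' into ``corank drops'' is the delicate step, since I must rule out accidental cancellations among the signed lower-order terms that could mask the degree drop.

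The step I expect to be hardest is controlling these signs in the state sum so that the leading coefficient genuinely does not cancel. The double-line expansion for $sl(n)$ carries $\pm$ signs (coming from the structure constants and from the trace-removal that distinguishes $sl(n)$ from $gl(n)$), and one must verify that among all resolutions achieving the maximal loop count $k+2$, the signs agree rather than cancel — otherwise the polynomial degree could drop even for a $d$-diagram. I would isolate the top-degree contribution, show that only the single orientation-respecting resolution attains $c = k+2$ (so there is nothing to cancel against), and confirm its coefficient is nonzero; the subtlety is that the $gl(n)$-to-$sl(n)$ correction terms live in strictly lower degree in $n$ and therefore cannot interfere with the leading term. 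Once that non-cancellation is established, combining it with the bipartiteness criterion completes the characterization.
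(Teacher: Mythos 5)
Your proposal follows essentially the route the paper relies on, with the caveat that the paper does not actually prove this theorem in the present text: it is quoted from \cite{Diplomarbeit} and \cite{MitRutwig}, and what the paper does supply (in the section on weight systems) is exactly the skeleton you describe --- the expansion of the iterated commutators into $2^{2k}$ signed monomials $(-1)^{l}p_{1}\dots p_{l}x q_{2k-l}\dots q_{1}$, the $gl(n)$ contraction rules (\ref{okri}) which split or merge circles, the identification of these contractions with surgeries along chords (hence with Soboleva's corank count), and the characterisation of the maximal circle number $k+2$ via splitting the chords into two families of pairwise unlinked chords, i.e.\ bipartiteness of the intersection graph. Two points in your sketch need adjustment. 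First, the ``crude bound $c\le k+1$'' is not upgraded to $k+2$ by strand bookkeeping; rather, the correct starting point is that the self-contraction of $x$ (the adjoint trace) already splits the Wilson loop into \emph{two} circles, after which each chord adds at most one, giving $c\le k+2$ directly. Second, and more importantly, your non-cancellation argument rests on the claim that only a \emph{single} resolution attains $c=k+2$; this is false in general, since a $d$-diagram may admit several bipartitions of its chord set, each contributing a maximal term. The leading coefficient survives for a different reason, which the paper states explicitly: in a ``good'' summand (both ends of every chord on the same circle) each chord contributes $0$ or $2$ to the exponent $l$, so $(-1)^{l}=+1$ for every summand reaching $k+2$ circles, and these terms add rather than cancel; meanwhile any ``bad'' summand merges the two circles along some chord and can produce at most $k$ circles, so it cannot interfere with the top degree. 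With that parity observation substituted for the uniqueness claim, your argument is sound and coincides with the cited proof.
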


As an immediate consequence from this theorem we see that each basis
of the chord diagram algebra consisting of chord diagrams contains
at least one $d$-diagram.

On the other hand, it underlines the special role of $d$-diagrams
amongst all chord diagrams from two points of view: as those
(corresponding to graphs) embeddable in $S^{2}$ and as those having
the highest possible degree of the leading term.

It turns out that this is not an incident: degrees of the
$W_{sl_{n}}$ polynomial are closely connected to possible embeddings
of the $4$-valent graph into surfaces. We shall touch on this
subject in later sections.

\subsection{Checkerboard colourable embeddings}

As we have seen, in order to mimimise (maximise) the genus of the
surface the graph can be embedded into, we have to maximise
(minimise) the number of circles obtained as a result of surgeries
along two subsets $I,J$ of chords of the initial chord diagram such
that $I\sqcup J$ forms the complete set of chords.

Before we have reformulated this problem in terms of ranks of
incidence matrices. A new formulation comes with the generating
function.

Let $\Gamma$ be a framed $4$-valent graph on $k$ vertices, and let
$D$ be a chord diagram corresponding to some circuit of $\Gamma$.
Consider the following function

\begin{equation}f(C):=f(\Gamma)= \sum_{atoms}x^{k+2-g},\label{xiyouji}\end{equation}
where the sum is taken over all atoms with $A$-structure taken from
$\Gamma$ and $g$ is the genus of the atom.

In view of Soboleva's theorem, $f(C)$ depends merely on the
intersection graph of $\Gamma$.

Consider the restriction of $f(C)$ to chord diagrams with only
positive chords (i.e., to graphs corresponding to orientable atoms).

\begin{thm}
$f(C)$ is a well-defined function on the algebra of chord diagrams,
i.e., it satisfies the $4T$-relation.

Analogously, $f(C)$ is a function on the graph algebra.

Moreover, $f(C)$ is multiplicative with respect to the
multiplication operations in these algebras.\label{turntoproof}
\end{thm}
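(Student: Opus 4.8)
The plan is to carry out the whole argument at the level of the intersection graph, since by Soboleva's theorem $f$ depends only on that graph and the $4T$-relation has already been described above in purely combinatorial terms (toggling the edge $\alpha\beta$, and toggling $\beta$'s incidences with the neighbours $N_{\alpha}$ of $\alpha$). Recall that $f(\Gamma)=\sum_{I\sqcup J}x^{k+2-g(I,J)}$, the sum running over all $2^{k}$ splittings of the vertex set, and that for an orientable atom $2g(I,J)=\mathrm{rank}\,M_{I}+\mathrm{rank}\,M_{J}$, where $M_{I},M_{J}$ are the principal submatrices of the ${\bf Z}_{2}$-intersection matrix $M$ cut out by $I$ and $J$. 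Thus the exponent attached to a splitting is governed entirely by these two ranks, and proving the relations reduces to controlling how the ranks change under the $4T$ moves and under taking products.

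First I would record an algebraic reformulation of the move $A\mapsto C$: over ${\bf Z}_{2}$ it is realised by the congruence $M\mapsto P^{\mathrm T}MP$ with the elementary transvection $P=E+E_{\alpha\beta}$ (here $E$ is the identity and $E_{\alpha\beta}$ has a single $1$ in position $(\alpha,\beta)$), which simultaneously adds row $\alpha$ to row $\beta$ and column $\alpha$ to column $\beta$. A direct check, using symmetry of $M$ and $M_{\alpha\alpha}=M_{\beta\beta}=0$ for positive chords, shows that $P^{\mathrm T}MP$ keeps the diagonal zero, keeps the $\alpha\beta$-entry, and toggles exactly the entries $M_{\beta j}=M_{j\beta}$ with $j\in N_{\alpha}$ — that is, it is precisely the passage $A\to C$ (and likewise $B\to D$). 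The decisive feature is that $P$ is \emph{supported on} $\{\alpha,\beta\}$: if a family $I$ contains both $\alpha$ and $\beta$, then $P$ is block-diagonal for the decomposition $I\sqcup J$, so $(P^{\mathrm T}MP)_{I}=P_{I}^{\mathrm T}M_{I}P_{I}$ is a genuine congruence of $M_{I}$ while $M_{J}$ is untouched; if $I$ contains neither, $P$ restricts to the identity. Hence $\mathrm{rank}\,M_{I}$, and therefore the entire number $g(I,J)$, is preserved by $A\to C$ whenever $\alpha,\beta$ lie in the same family. This refines the Remark above (equality of circle numbers for $A$ and $C$) from the single global surgery to \emph{every} individual state.

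With this localisation in hand the relation $f(A)-f(B)=f(C)-f(D)$ follows by matching splittings one at a time. Group the $2^{k}$ splittings by the fixed distribution of the $k-2$ common vertices together with the location of $\alpha,\beta$. If $\alpha$ and $\beta$ lie in \emph{different} families, the edge $\alpha\beta$ is a cross-edge and enters neither $M_{I}$ nor $M_{J}$; since $A,B$ (respectively $C,D$) differ only in that edge, such splittings contribute $0$ to $f(A)-f(B)$ and to $f(C)-f(D)$. If $\alpha,\beta$ lie in the \emph{same} family, the congruence above gives $g^{A}=g^{C}$ and $g^{B}=g^{D}$ for that very splitting, so its contribution $x^{k+2-g^{A}}-x^{k+2-g^{B}}$ to $f(A)-f(B)$ coincides with its contribution $x^{k+2-g^{C}}-x^{k+2-g^{D}}$ to $f(C)-f(D)$. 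Summing proves the identity. Since every step was phrased through the intersection matrix, the same computation establishes the relation in the graph algebra; the framed mixed-sign case is identical once the diagonal is tracked, the sign bookkeeping being exactly the generalized $4T$-relation recorded in the Remark preceding the theorem.

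Multiplicativity will come from the block structure of $M$. The product of two chord diagrams concatenates their core circles, so the chords of the two factors are mutually unlinked and the intersection matrix of the product is the block sum $M^{(1)}\oplus M^{(2)}$ (on the graph side the product is the disjoint union). A splitting of the product restricts to splittings of the factors, and block-diagonality gives $\mathrm{rank}\,M_{I}=\mathrm{rank}\,M^{(1)}_{I_{1}}+\mathrm{rank}\,M^{(2)}_{I_{2}}$ and similarly for $J$, whence $g=g_{1}+g_{2}$ (equivalently, the atom genus is additive under the connected sum of atoms). As the number of vertices also adds, the exponents add over the product and the generating function factorises, so $f$ is multiplicative up to the fixed power of $x$ accounting for the two faces that merge in the connected sum (the strictly multiplicative normalisation having exponent $k-g$). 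I expect the genuinely delicate step to be the localisation claim of the second paragraph — that the transvection realising $A\to C$ is supported on $\{\alpha,\beta\}$ and therefore restricts to a congruence on each family — since this is exactly what upgrades Soboleva's global circle count into the per-state rank invariance that makes the four terms cancel.
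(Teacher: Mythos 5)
Your argument is correct, and its overall skeleton coincides with the paper's: you split the $2^{k}$ states according to whether $\alpha$ and $\beta$ lie in the same family, show per-state cancellation of $A$ against $B$ (and $C$ against $D$) in the cross case, and per-state equality of genera $g^{A}=g^{C}$, $g^{B}=g^{D}$ in the same-family case. Where you genuinely diverge is in how that last equality is justified. The paper disposes of it with ``a straightforward calculation of the number of circles,'' i.e.\ a direct surgery count on the chord diagram, and then declares the graph-algebra case ``analogous.'' You instead route everything through Soboleva's theorem and realise the move $A\to C$ as the congruence $M\mapsto P^{\mathrm T}MP$ with $P=E+E_{\alpha\beta}$; the observation that this transvection is supported on $\{\alpha,\beta\}$ and hence restricts to a rank-preserving congruence of $M_{I}$ while fixing $M_{J}$ is exactly the localisation the paper leaves implicit. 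This buys you two things the paper's phrasing does not: the graph-algebra statement is no longer ``analogous'' but literally the same computation (important, since a framed graph need not come from any chord diagram, so a circle count is not even available there), and the block-sum argument for multiplicativity falls out of the same formalism --- a claim the paper states in the theorem but never actually proves. Your remark that $f$ as normalised in (\ref{xiyouji}) is multiplicative only up to a fixed power of $x$ (strict multiplicativity requiring the exponent $k-g$) is a correct and worthwhile precision. The only soft spot is the closing sentence about the mixed-sign framed case, which is asserted rather than carried out; but that case belongs to Theorem \ref{mth1}, not to the statement under review, so it does not affect the proof of the present theorem.
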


Both chord diagram algebra and graph algebra have a commutative and
co-commutative Hopf-algebra structure, see, e.g.,
\cite{BN,CDL,MyBook}. By Milnor-Moore theorem, \cite{MilnorMoore},
each such algebra is isomorphic to the polynomial algebra of its
primitive elements. Thus, in order to calculate $f(C)$ for a given
chord diagram, one can use the algebraic structure of the Hopf
algebra of chord diagrams (or graphs).

Now, we turn to the proof of Theorem \ref{turntoproof}. Consider a
quadruple of chord diagrams $A,B,C,D$ on $n$ chords each forming a
$4T$-relation $A-B=C-D$ as shown in Fig. \ref{4T}. We can naturally
identify chords from $A,B,C,D$: there are $n-2$ chords in common,
one ``fixed chord'' (denoted by $\alpha$ in Fig. \ref{4T}) and one
``moving'' chord (denoted by $\beta$).

Consider summands for $f(A),f(B),f(C),f(D)$ coming from the
definition (\ref{xiyouji}). For those summands where $\alpha$ and
$\beta$ belong to the same subset of chords (say, $I$), the genus of
surface corresponding to $A$ is equal to the genus corresponding to
$C$, and the genus corresponding to $B$ is equal to the genus
corresponding to $D$ (this follows from a straightforward
calculation of the number of circles). Thus, these terms give the
same contribution to (\ref{xiyouji}).

For those summands where $\alpha$ and $\beta$ belong to different
subsets $I$ and $J$, the corresponding subdiagrams coincide:
$A_{I}=B_{I},A_{J}=B_{J},C_{I}=D_{I},C_{J}=D_{J}$ because when we
move $\alpha$ and $\beta$ to different subdiagrams, it does not
matter whether they intersect or not.

The proof for the graph algebra is analogous.

Arguing as above, one can prove the following
\begin{thm}
The restriction of function $f$ to ${\cal A}^{c}$ satisfies the
generalised $4T$-relation.\label{mth1}
\end{thm}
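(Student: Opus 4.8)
The plan is to mirror the proof of Theorem \ref{turntoproof}, but now track how the surgery count behaves across the two cases of the generalized $4T$-relation displayed in Fig. \ref{gen4T}, where the fixed chord $\alpha$ may be negative. As before, I would fix the $n-2$ common chords and the two special chords $\alpha$ (the fixed chord) and $\beta$ (the moving chord), and split the analysis of $f(A),f(B),f(C),f(D)$ according to whether $\alpha$ and $\beta$ land in the same subset $I$ (or $J$) of the chord-splitting or in different subsets.

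The routine case is when $\alpha$ and $\beta$ belong to \emph{different} subsets. Then moving $\beta$ across $\alpha$'s endpoint, and possibly flipping $\beta$'s sign, changes nothing within either subdiagram, since the incidence and sign of $\beta$ relative to chords in the \emph{other} family is untouched. Hence $A_I=B_I$, $A_J=B_J$, $C_I=D_I$, $C_J=D_J$ (with the appropriate sign bookkeeping), and these summands cancel in pairs just as in the positive case, contributing equally to both sides of the generalized relation.

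The case where $\alpha,\beta$ lie in the \emph{same} subset is the crux, and here I would lean directly on the remark preceding the statement: in any special case of the generalized $4T$-relation, the surgery along $A$ yields the same number of circles as the surgery along whichever of $C,D$ carries the plus sign on the right-hand side, and the surgery along $B$ matches the one carrying the minus sign. Since by Soboleva's theorem the number of circles in a state depends only on the corank of the intersection matrix — equivalently, the genus $g$ of the corresponding atom is determined by these circle counts — the exponent $k+2-g$ appearing in \eqref{xiyouji} is identical for the paired diagrams. Thus the contribution of $A$ equals that of the plus-term and the contribution of $B$ equals that of the minus-term, which is precisely the content of the relation $A-B=C-D$ (or $A-B=D-C$ when $\alpha$ is negative and the overall sign flips). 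The sign flip on the right-hand side in the negative-$\alpha$ case is exactly compensated by the sign interchange of $\beta$, so the two formulations of the relation are consistent.

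The main obstacle I anticipate is verifying the circle-counting claim cleanly for the negative-$\alpha$ case, where the sign of $\beta$ changes between the left and right sides: one must check that the surgery outcomes are correctly paired across the sign flip, so that the atom genera actually coincide. This is where I would invoke the remark quoted above rather than recompute coranks by hand, and confirm that the overall sign reversal $A-B=D-C$ is forced precisely by the fact that the intersecting-versus-non-intersecting configuration of $\alpha,\beta$ swaps roles under the sign change of $\beta$. Once this bookkeeping is settled, multiplicativity and well-definedness follow exactly as in the proof of Theorem \ref{turntoproof}, with the framed (generalized) relation replacing the ordinary one.
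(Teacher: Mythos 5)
Your proposal is correct and follows essentially the same route as the paper, which proves this theorem by the phrase ``arguing as above'' --- i.e., by repeating the same-subset/different-subset case analysis from Theorem \ref{turntoproof} and invoking the earlier remark that in any instance of the generalized $4T$-relation the surgery along $A$ gives the same number of circles as the surgery along the plus-term of the right-hand side, and $B$ matches the minus-term. Your explicit attention to the sign bookkeeping for negative $\alpha$ is a useful elaboration of what the paper leaves implicit, but it is not a different argument.
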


\begin{crl}
If for a framed graph $\Gamma$ satisfying source-target condition on
$k$ vertices and the corresponding chord diagram $C$ we have $deg
f(C)=k$ then $\Gamma$ is checkerboard-embeddable into torus.
\end{crl}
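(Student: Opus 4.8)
The plan is to read off $\deg f(C)$ as a direct measure of the minimal genus among all checkerboard-colourable embeddings of $\Gamma$, and then to verify that the value $k$ forces this minimum to be attained on a torus. Throughout I would use the correspondence, already established via Soboleva's theorem, between atoms with $A$-structure inherited from $\Gamma$ and splittings $I\sqcup J$ of the chords of $C$.

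First I would set up the degree--genus dictionary. For a splitting $I\sqcup J$ the two states carry $1+\mathrm{corank}\,M_I$ and $1+\mathrm{corank}\,M_J$ circles, so the associated surface has $V=k$ vertices, $E=2k$ edges and $F=2+\mathrm{corank}\,M_I+\mathrm{corank}\,M_J$ faces. Euler's formula then gives $\mathrm{rank}\,M_I+\mathrm{rank}\,M_J=2-\chi$, so the exponent $k+2-g$ that this atom contributes to (\ref{xiyouji}) is exactly $k+2-(\mathrm{rank}\,M_I+\mathrm{rank}\,M_J)$. Since the sum in (\ref{xiyouji}) ranges over all atoms, that is, over all splittings, I obtain
\[
\deg f(C)=k+2-\min_{I\sqcup J}\bigl(\mathrm{rank}\,M_I+\mathrm{rank}\,M_J\bigr),
\]
with the top-degree term realised by a genuine minimal-genus atom.

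Next I would invoke the source-target hypothesis. By the theorem above, the source-target condition means every chord of $C$ is positive, so $M$ and each principal submatrix $M_I,M_J$ are symmetric with zero diagonal over ${\bf Z}_{2}$; such alternating matrices have even rank. Hence the rank sum above is always even, the realisable exponents are $k+2,k,k-2,\dots$, and every atom arising from $\Gamma$ sits on an orientable surface. The spherical case is already pinned down: $\deg f(C)=k+2$ precisely when the minimal rank sum is $0$, i.e. when $C$ is a $d$-diagram.

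Finally I would specialise to $\deg f(C)=k$. By the displayed identity this is equivalent to $\min_{I\sqcup J}(\mathrm{rank}\,M_I+\mathrm{rank}\,M_J)=2$, and because the minimum is attained there is a splitting with rank sum exactly $2$. The corresponding atom has $\chi=0$ and, by the source-target condition, lies on an orientable surface; the only such closed surface is the torus, which gives the required checkerboard-colourable embedding of $\Gamma$. I expect the one delicate point to be the bookkeeping in the dictionary: confirming that the exponent $k+2-g$ in (\ref{xiyouji}) matches $k+2-(\mathrm{rank}\,M_I+\mathrm{rank}\,M_J)$, so that here $g$ is the first Betti number $2-\chi$ (twice the geometric genus), together with the even-rank property, which is precisely what guarantees that no realisable exponent lies strictly between the spherical value $k+2$ and the toroidal value $k$, and hence that $\deg f(C)=k$ localises the minimal genus unambiguously at the torus.
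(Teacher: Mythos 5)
Your proof is correct and follows essentially the same route as the paper's one-sentence argument (maximal degree $k+2$ occurs only for planar embeddings, orientability is guaranteed by the source-target condition, and degree $k$ forces genus $1$); you have simply made explicit the rank/Euler-characteristic bookkeeping via Soboleva's theorem and the even-rank parity that the paper leaves implicit. Your reading of the exponent in (\ref{xiyouji}) as $k+2-(2-\chi)$ is indeed the one consistent with the paper's own proof, which treats the drop from degree $k+2$ to degree $k$ as a single unit of genus.
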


\begin{proof}
Indeed, the maximal possible degree $k+2$ corresponds only to planar
embedding; the orienability of the surface is guaranteed by the
source-target condition, and the degree $k$ corresponds to genus
$1$.
\end{proof}

It is important to know what sort of weight system we obtain from
$f$. It turns out that this weight system is closely connected to
$W_{sl_n}$; roughly speaking, $W_{sl_n}$ can be represented as a sum
of $2^{2k}$ summands (for $k$ chords); some $k$ of them give exactly
the function $f$. Moreover, the $W_{sl_n}$-polynomial itself gives a
``generating function'' for {\em some more general embeddings} (see
ahead), however, this generating function has signs $\pm$, so the
embeddings are counted with pluses and minuses, which means that not
the whole information can be restored from $W_{sl_n}$.

To clarify the situation, we shall need some more information about
calculating $W_{sl_n}$ (see \cite{Diplomarbeit} and
\cite{MitRutwig}). We will in fact work in $gl_{n}$; the result of
final contraction will be the same as that for $sl_{n}$.

Given a chord diagram $D$, fix an arc of it and break this diagram
along the arc. Then $W_{sl_n}(D)=Tr(x\to [\dots[,x]\dots]$ where by
$[\dots[,]\dots]$ we mean the result of consequent commutators of
$x$ with elements of the Lie algebra, where for each chord we take
$\alpha$ on one end of the chord and the dual element $\alpha^{*}$
on the other end of the chord and sum up when $\alpha$ runs the
basis of the Lie algebra. Let us be more specific. Consider the
diagram shown in Fig. \ref{NyFig}.

\begin{figure}
\centering\includegraphics[width=300pt]{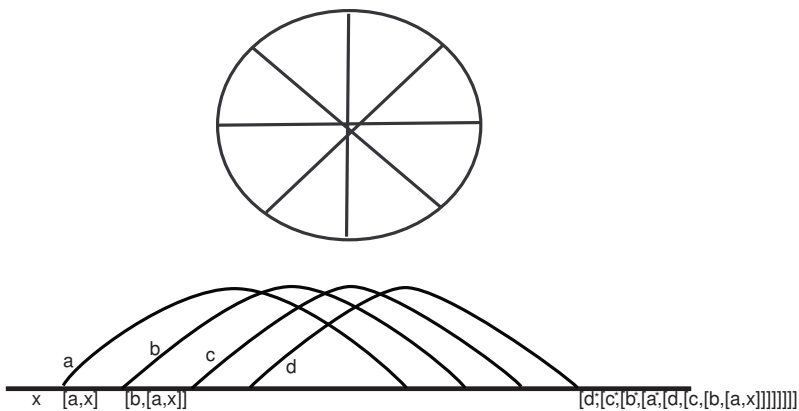} \caption{The
contraction operator} \label{NyFig}
\end{figure}

The ``long'' commutator can be rewritten according to $[p,q]=pq-qp$.
Thus we get $2^{2k}$ terms of the following form $(-1)^{l}p_{1}\dots
p_{l}x q_{2k-l}\dots q_{1}$, where $p_{1},\dots p_{l}$ are variables
corresponding to some chord ends in the usual order, and
$q_{2k-l},\dots, q_{1}$ are the remaining chord ends in the reversed
order. For instance, for a diagram shown in Fig. \ref{NyFig}, we get
summands like $(-1)d^{*}baxcda^{*}b^{*}c^{*}$.

Now, two simple $gl(n)$-contraction formulae come into play:

\begin{equation}
Tr(A\alpha B\alpha^{*})=Tr(A)Tr(B);\
Tr(A\alpha)Tr(B\alpha^{*})=Tr(AB). \label{okri}
\end{equation}

Here the sum is taken over $\alpha$ running over a basis of
$gl_{n}$, whence $\alpha^{*}$ runs the dual basis; $A$ and $B$ may
be arbitrary $n\times n$ matrices.

With these formulae, we may contract any formula of the sort
described above.

The formulae (\ref{okri}) have the following geometric meaning: if
$A$ and $B$ represent collections of some chord ends (lying on one
or two circles of a chord diagram) then the contraction transforms
one circle into two or two circles into one.

{\bf This means precisely that the contraction rules in $sl_{n}$
correspond to surgery operations.}

In the very beginning, we may contract $x$ with itself: this will
lead to a subdivision of the circle into two circles according to
the way of choosing variables on the left hand side with respect to
$x$ and variables on the right hand side with respect to $x$.

Schematically, it means that we have to collect $2^{2k}$ terms
corresponding to different splittings of chord ends into two
circles. For instance, $(-1)d^{*}baxcda^{*}b^{*}c^{*}$ corresponds
to the diagram shown in Fig. \ref{dgrm}.

\begin{figure}
\centering\includegraphics[width=200pt]{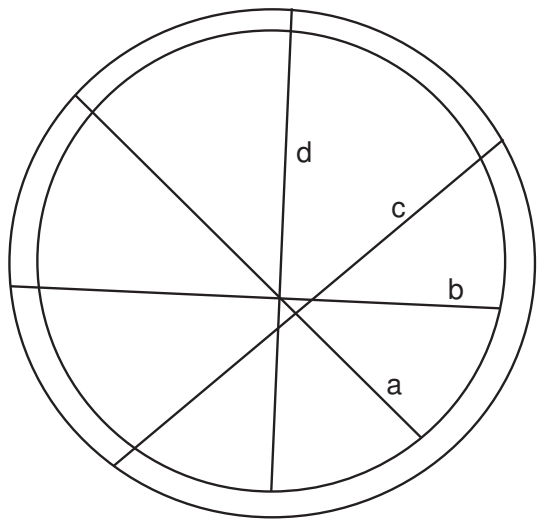} \caption{A chord
diagram on 2 circles corresponding to
$(-1)d^{*}baxcda^{*}b^{*}c^{*}$} \label{dgrm}
\end{figure}

Call a summand {\em good} if for each chord both ends belong to the
same subset. Such summands contribute with sign $+$. Now, it follows
from definition that the contribution of good summands gives exactly
the function $f$.

Now, we the geometric meaning of the function $f$ itself.

\subsection{Embeddings with orienting ${\bf Z}_{2}$-homology class}

For generic summands (not necessarily good) we deal with arbitrary
ways of splittings of chord ends into two sets. This leads to an
arbitrary way of attaching bands to the annulus, and, finally, gives
the generating function for arbitrary embeddings of our framed graph
provided that the ${\bf Z}_{2}$-homology represented by this graph
corresponds to an orienting cycle.

Let us be more specific. First of all, our weight systems
corresponding to Lie algebras are defined only for the case when all
chords of the chord diagram are positive. On the other hand, the
generating function can be written down for an arbitrary graph. As
we shall see, all generating functions will satisfy the generalized
$4T$-relation and give a certain generalized weight system in the
sense of Lando.

Analogously to $f$, we define the function ${\tilde f}$ as follows:
\begin{equation}{\tilde f}(C):={\tilde f}(\Gamma)= \sum_{{\bf Z}_{2}-\mbox{orient.emb}}x^{k+2-g},\label{xiyouji'}\end{equation}
where the sum is taken over ${\bf Z}_{2}$-orientable embeddings.

Analogously to Theorem \ref{mth1}, one can prove \begin{thm} The
restriction of ${\tilde f}$ to chord diagrams satisfies the
$4T$-relation.
\end{thm}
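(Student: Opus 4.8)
The plan is to repeat the term-by-term matching argument used for Theorem~\ref{turntoproof} and Theorem~\ref{mth1}, but now carried out at the level of splittings of chord \emph{ends} rather than whole chords. Recall that, unlike the checkerboard function $f$, the value ${\tilde f}(C)$ is a sum over all ${\bf Z}_2$-orientable embeddings, and each such embedding is encoded by a splitting of the $2k$ chord ends into two sets (the two sides produced by the initial $x$-contraction in the $gl_n$ picture), the genus $g$ being a function of the resulting number of boundary circles. The $4T$-quadruple $A,B,C,D$ shares $n-2$ common chords, a fixed chord $\alpha$ and a moving chord $\beta$, and the four diagrams differ only in the placement of one end of $\beta$ relative to $\alpha$; the $2(n-2)$ ends of the common chords together with the four ends of $\alpha,\beta$ are in canonical bijection across the four diagrams.

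First I would group the summands of ${\tilde f}(A),{\tilde f}(B),{\tilde f}(C),{\tilde f}(D)$ according to the induced partition of the ends of the common chords together with the partition of the four ends of $\alpha$ and $\beta$. For a fixed such configuration, I would split into two cases exactly as before. In the configurations where the move does not alter the way $\alpha$ and $\beta$ interact with the two sets --- for instance when the relevant ends of $\alpha$ and $\beta$ lie in different sets, so that sliding a $\beta$-end across an $\alpha$-end changes neither the boundary-circle count nor the subdiagrams seen on either side --- the four corresponding summands carry equal exponents and cancel in the alternating sum $A-B-C+D$. In the remaining configurations the move does change the genus, and here the key tool is the circle-count invariance already recorded in the excerpt: the surgery along $A$ produces the same number of circles as the surgery along the diagram carrying the plus sign on the right-hand side, and the surgery along $B$ matches the one carrying the minus sign. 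Verifying that this invariance holds set-by-set for the band picture underlying ${\tilde f}$ gives $x^{k+2-g_A}=x^{k+2-g_C}$ and $x^{k+2-g_B}=x^{k+2-g_D}$ (with the relabeling $C\leftrightarrow D$ forced in the negative case), so the contributions again cancel; summing over all configurations yields ${\tilde f}(A)-{\tilde f}(B)={\tilde f}(C)-{\tilde f}(D)$.

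The main obstacle I anticipate is the bookkeeping of the finer chord-end partitions: because each of $\alpha$ and $\beta$ may now have its two ends distributed across the two sets in several ways, there are strictly more local configurations to enumerate than in the whole-chord argument for $f$, and for each one must confirm that the local effect of sliding a $\beta$-end across an $\alpha$-end on the number of boundary circles is exactly the one guaranteed by the circle-count invariance. Establishing this uniformly --- in particular checking that the orientability (source-target) constraint is preserved under the move, so that all four terms genuinely contribute to ${\tilde f}$ and the restriction to chord diagrams with positive chords is respected --- is the step requiring care. Once it is in place, the cancellation is the same formal computation as in Theorem~\ref{turntoproof}, and the proof for the graph-algebra version follows by the identical argument with the surgery counts read off the intersection matrix via Soboleva's theorem.
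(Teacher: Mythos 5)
Your proposal takes essentially the same approach as the paper, which in fact offers no independent proof of this theorem at all --- it states only that the result follows ``analogously to Theorem~\ref{mth1}'' --- and your term-by-term matching of summands in the $4T$ quadruple, now indexed by splittings of chord \emph{ends} rather than whole chords, is precisely the intended analogue of the argument given for Theorem~\ref{turntoproof}. You supply strictly more detail than the paper does, correctly isolating the one point (configuration-by-configuration verification of the circle-count invariance for the band picture) that the paper leaves entirely implicit.
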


This immediately yields the following

\begin{crl}
If ${\tilde f}(C)$ has maximal degree $k+2$ then the corresponding
framed $4$-valent graph is checkerboard-embeddable into the torus.
\end{crl}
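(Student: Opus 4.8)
The plan is to follow the template of the preceding corollary, with the role of $f$ (orientable atoms) played by ${\tilde f}$ ($\bf Z_2$-orienting embeddings), invoking the theorem just proved that ${\tilde f}$ satisfies the $4T$-relation so that the exponents occurring in (\ref{xiyouji'}) are genuinely well-defined on the chord diagram. The first step is to make the dictionary between the exponent $k+2-g$ and the topology of the embedding precise. A splitting of the chords of $C$ into two families $I\sqcup J$ produces, via the band-attachment construction, a checkerboard embedding whose number of $2$-cells is, by Soboleva's theorem, $F=(1+\mathrm{corank}\,M_I)+(1+\mathrm{corank}\,M_J)$. Since the frame has $k$ vertices and $2k$ edges, the Euler characteristic is $\chi=F-k=2-(\mathrm{rk}\,M_I+\mathrm{rk}\,M_J)$, so the exponent equals $k+\chi=F$ and records exactly the number of faces; the maximal degree of ${\tilde f}$ is therefore attained by the embedding of largest $\chi$, i.e.\ of least genus.

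Next I would exploit the fact that, unlike $W_{sl_n}$, the generating function ${\tilde f}$ carries no signs: every monomial $x^{k+2-g}$ is contributed with coefficient $+1$ by an actual embedding, so a nonzero top coefficient in degree $k+2$ is not an artefact of cancellation but certifies an embedding with $F=k+2$, that is $\chi=2$. As the only closed surface with Euler characteristic $2$ is $S^{2}$, this embedding is planar, and since $H_1(S^{2};{\bf Z}_2)=0$ it is automatically checkerboard colourable. Invoking the analysis of the planar case, $C$ is then a $d$-diagram all of whose chords are positive.

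Finally I would pass from the sphere to the torus by re-splitting: starting from the planar splitting (for which $\mathrm{rk}\,M_I+\mathrm{rk}\,M_J=0$) I would exhibit another splitting $I'\sqcup J'$ of the same, all-positive, chords with $\mathrm{rk}\,M_{I'}+\mathrm{rk}\,M_{J'}=2$, which by the dictionary gives $\chi=0$; since all chords remain positive, Statement \ref{orientability} guarantees the resulting surface is orientable, hence the torus, and the band construction makes the embedding checkerboard by design. The step I expect to be the main obstacle is precisely this last one: one must guarantee that a splitting realising total rank \emph{exactly} $2$ always exists, i.e.\ that the $d$-diagram splitting can be perturbed to raise the total rank by exactly two without overshooting, and one must dispose of the small/degenerate graphs where no genus-one cellular embedding is available. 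Pinning down this re-splitting (and, secondarily, fixing the convention that the $g$ in (\ref{xiyouji'}) is read as $2-\chi$ rather than the orientable genus used for $f$) is where the real work lies; once it is in place the argument runs parallel to the preceding corollary.
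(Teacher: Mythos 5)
The paper offers no argument here at all: the corollary is announced as an ``immediate'' consequence of the $4T$-theorem for ${\tilde f}$, to be read through the same degree-to-genus dictionary as the preceding corollary for $f$ (exponent $=$ number of faces $=\chi+k$). Your first two paragraphs reconstruct that dictionary correctly, and you add a point the paper leaves implicit, namely that the absence of signs in the defining sum for ${\tilde f}$ is what lets a nonzero top coefficient certify an actual embedding rather than an artefact of cancellation. One caveat: the summands of ${\tilde f}$ correspond to splittings of the $2k$ chord \emph{ends} between the two circles, not of the $k$ chords into two families, so Soboleva's theorem in the form you quote governs only the ``good'' (checkerboard) summands; the relation $F=\chi+k$ that you actually use is unaffected by this, so your conclusion that degree $k+2$ forces $\chi=2$, hence a spherical, checkerboard, all-positive $d$-diagram embedding, stands.

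The genuine gap is the step you yourself flag as the main obstacle, and it is not merely unfinished --- it cannot be carried out. Your plan is to promote the spherical embedding to a toral one by exhibiting a splitting $I'\sqcup J'$ with $\mathrm{rk}\,M_{I'}+\mathrm{rk}\,M_{J'}=2$. But for a $d$-diagram with pairwise unlinked positive chords (already for a single chord, $k=1$) every submatrix of the adjacency matrix is zero, so every splitting has total rank $0$; no cellular checkerboard embedding into the genus-one surface exists (a one-face cellular embedding cannot be checkerboard coloured), and yet ${\tilde f}$ attains degree $k+2$ since the spherical embeddings are among the ${\bf Z}_{2}$-orienting ones. So the sphere-to-torus promotion fails, and with it the corollary read literally: either ``torus'' must be understood as ``genus at most one'' (in which case your argument should simply stop at the sphere and the third paragraph be deleted), or the hypothesis should be the analogue of the one in the preceding corollary (degree $k$ rather than maximal degree $k+2$). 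As written, the proposal does not close, and no amount of work on the re-splitting step will make it close.
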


Unfortunately, the inverse is not true: the graph corresponding to
the chord diagram consisting of three pairwise-linked chords is
embeddable into torus, though the $sl(n)$-function on this chord
diagram gives zero.

 Now, let us try to understand the geometric
meaning of ${\tilde f}$. For a chord diagram on $k$ chords, the
function $f$ can be represented by $2^{2k}$ summands which clearly
correspond to some contraction rules in $sl_{n}$. These are exactly
the summands corresponding to those contractions where we place both
ends of each chord on the same circle.

It would be very interesting to understand the nature of contraction
along chords connecting points on different circles, see Fig.
(\ref{dgrm}).

First of all, in the expansion for a commutator, we may get a minus
sign, which corresponds to a surgery along a chord diagram with two
ends on different circles.

Now assume we perform count the $sl(n)$ weight system for a chord
diagram. As we see, after expanding the commutators, the expressions
for two circles go in the opposite order. Thus, in order to restore
the real picture of embedding genera, one should perform overtwisted
surgeries along chords with endpoints on different circles.

Thus, $W_{sl_n}$-weight system estimates the genera of the surfaces
the graph can be embedded to, but:

\begin{enumerate}

\item It counts embeddings with signs, thus, for some embedding it
does not give a real estimate for the genus. For instance, for the
chord diagram with $3$ pairwise intersecting chords the value of the
$W_{sl_n}$-function is zero.

\item The contraction corresponding to chords with ends on different
circles count embeddings of the same graph, but with another
framing.

\end{enumerate}

So, the geometric meaning of ${\tilde f}$ is not yet completely
understood.

\subsection{The general case}

We have described how to write the generating function of all
embeddings for a given graph into surfaces such that the ${\bf
Z}_{2}$-homology class of the graph is orienting.

Namely, let $\Gamma$ be a framed graph and let $C$ be its rotating
circuit. If we want to embed $\Gamma$ into some surface in such a
way that $\Gamma$ represents a non-orienting ${\bf Z}_{2}$-cycle
then this condition depends on $\Gamma$ itself and does not depend
on $C$. Thus, we have to start with embedding the neighbourhood of
$C$ as the medial circle of the M\"obius band. Then we attach bands
to this M\"obius band in the usual way (according to the
$A$-structure of $\Gamma$) and paste the boundary components by
discs.

At the level of chord diagrams this means the following. Instead of
taking two copies of the chord diagram circle, we take a $2$-fold
covering of the circle by a circle and distribute the $2n$ chord
endpoints in one of two possible positions each, and then perform
contraction, see Fig. \ref{contractnonorientable}.

\begin{figure}
\centering\includegraphics[width=300pt]{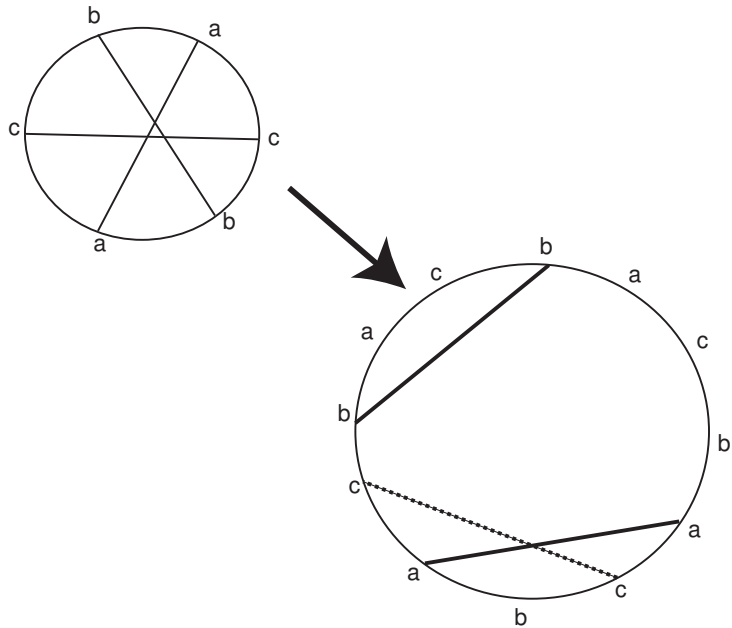}

\caption{$2$-fold covering of a chord diagram}
\label{contractnonorientable}
\end{figure}

\section{Unsolved problems}

The method of matrix ranks gives an explicit polynomial solution
only in a very limited number of cases. It is known that for every
surface of rank $g$ there is a solution to Problem 1 which is
polynomial in the number of chords. It would be very interesting to
get such algorithms via matrices.

Both bialgebra of chord diagrams and coalgebra of framed diagrams
are well known. At the level of chord diagrams their structures are
well studied.

However, weight system approach is applicable only to chord diagrams
with solid chords (which correspond to chord diagrams satisfying the
source-target condition). It is not yet known how to apply any
similar techniques for framed chord diagrams (with ``dashed''
chords). Possibly, one should treat positive chords by means of
$sl(n)$-tensors, and negative chords by means of $so(n)$ or maybe
$sp(n)$-tensors (cf. \cite{BN} and \cite{MitRutwig}).

It is well known (see \cite{LandoZvonkine}) that many enumerative
problems in graph theory can be solved by using Gaussian integrals.
However, these problems usually count generating functions for
genera coming from {\em all possible} gluings, say, of a polygon. In
our problem, we have to fix a graph (or a chord diagram), and
consider the generating function for genera of surface this graph
can be embedded into. Possibly, this can be done by means of a Gauss
integral for all {\em admissible} gluings of crosses at vertices of
the diagram.

A four-valent graph with $A$-structure can be represented as a
shadow of a virtual link. Problem 1 is devoted to finding the
minimal atom genus for a link with this shadow and some classical
crossing setup. Fixing such a link with one white cell leads to a
chord diagram and a rotating circuit. The Kauffman bracket of this
link is very similar to the generating function for the solution of
Problem 1: it has $2^{n}$ summands. It is known \cite{Mel} that the
Kauffman bracket (after some variable change) giving a series of
weight systems integrates to give the Kauffman $2$-variable
polynomial of the knot. It would be interesting to know which are
the knot invariants that can be obtained by integrating the
generating functions for Problem 1 and which knot-theoretic
properties they can detect.

\end{document}